\theoremstyle{plain}
\newtheorem{thm}{Theorem}[section]
\newtheorem{lem}{Lemma}[section]
\newtheorem{cor}{Corollary}[section]
\theoremstyle{definition}
\newtheorem{df}{Definition}[section]
\newtheorem{rem}{Remark}[section]
\newtheorem{ex}{Example}[section]
\newcommand{\FF}{\mathbb{F}}
\newcommand{\ZZ}{\mathbb{Z}}
\newcommand{\RR}{\mathbb{R}}
\newcommand{\CC}{\mathbb{C}}
\newcommand{\QQ}{\mathbb{Q}}
\newcommand{\HH}{\mathbb{H}}
\newcommand{\CJ}{\mathrm{CJ}}
\newcommand{\cwe}{\mathbf{cwe}}
\newcommand{\la}{\langle}
\newcommand{\ra}{\rangle}
\def\bm#1{\mathbf{#1}}
\DeclareMathOperator{\supp}{supp}
\DeclareMathOperator{\wt}{wt}
\DeclareMathOperator{\Harm}{Harm}
\DeclareMathOperator{\Gram}{Gram}
\DeclareMathOperator{\Hom}{Hom}
\DeclareMathOperator{\Mat}{Mat}
\DeclareMathOperator{\Ima}{Im}
\DeclareMathOperator{\tr}{tr}
\DeclareMathOperator{\diag}{diag}
\begin{document}

\title
[Equivariant theory]
{Equivariant theory for codes and lattices I}

\author[Chakraborty]{Himadri Shekhar Chakraborty*}
\thanks {*Corresponding author}
\address
	{
		Department of Mathematics\\ 
		Shahjalal University of Science and Technology\\ 
		Sylhet-3114, Bangladesh 
	}
\email{himadri-mat@sust.edu}

\author[Miezaki]{Tsuyoshi Miezaki}
\address
	{
		Faculty of Science and Engineering\\
		Waseda University\\ 
		Tokyo 903-0213, Japan
	}
\email{miezaki@waseda.jp} 


\date{\today}
\maketitle

\begin{abstract}
	In this paper,
	we present a generalization of Hayden's theorem~\cite[Theorem 4.2]{BrHalHay1981} 
	for $G$-codes over finite Frobenius rings.
	A lattice theoretical form of this generalization is also given.
	Moreover, 
	Astumi's MacWilliams identity~\cite[Theorem 1]{Atsumi1995} is generalized
	in several ways for different weight enumerators
	of~$G$-codes over finite Frobenius rings.
	Furthermore, 
	we provide the Jacobi analogue of Astumi's MacWilliams identity for~$G$-codes
	over finite Frobenius rings.
	Finally, we study the relation between $G$-codes and its corresponding $G$-lattices.
\end{abstract}

{\small
	\noindent
	{\bfseries Key Words:}
	$G$-codes, weight enumerators, Jacobi polynomials, $G$-lattices, theta series, 
	Jacobi theta series.\\ \vspace{-0.15in}
	
	\noindent
	2010 {\it Mathematics Subject Classification}. 
	Primary 11T71;
	Secondary 94B05, 11F11.\\ \quad
}


\section{Introduction}

Recently, the authors in several articles~\cite{Atsumi1995, BrHalHay1981, Yoshida1988} expressed their interest in the study of linear codes over~$\FF_{q}$ with group action.
In~\cite[Theorem 4.2]{BrHalHay1981}, Hayden showed for a $G$-code~$C$ over~$\FF_{q}$ that 
$(C\theta)^{\perp} = \ker \theta \oplus C^{\perp}\theta$,
where $G$ is an automorphism group such that 
$|G|$ has an inverse in~$\FF_{q}$ 
and~$\theta$ is an operator defined as:
\[
	\theta 
	:= 
	\frac{1}{|G|}
	\sum_{g\in G}
	g.
\]
Now this theorem is called Hayden's theorem and the operator~$\theta$ is called
Hayden's operator.
Yoshida~\cite{Yoshida1988} gave a generalization of the MacWilliams 
identity for $G$-codes over~$\FF_{q}$, where $G$ is a permutation group
such that $|G|$ is prime to~$q$.
Later, Atsumi~\cite{Atsumi1995} presented an analogue of Yoshida's MacWilliams
identity for $G$-codes with an action of Hayden's operator.
Consecutively, Atsumi~\cite{Atsumi1998} introduced the notion of $G$-lattices
and gave a lattice version of Hayden's theorem.

In this paper, we introduce a Hayden's like operator, and 
present a generalization of Hayden's theorem, firstly for $G$-codes 
over finite Frobenius rings (see~\cite{Wood1999}) and then for
$G$-lattices.
We also generalized Atsumi's MacWilliams identity for
weight enumerators of $G$-codes over finite Frobenius rings.
Bachoc~\cite{Bachoc} gave a harmonic generalization of the MacWilliams
identity for codes associated to discrete harmonic functions.
We refer the readers to~\cite{BachocNonBinary, BrChIsMiTa2023, CMO2023, Tanabe2001}
for more discussions on codes associated to discrete harmonic functions.
We present the harmonic generalization of Atsumi's MacWilliams identity
for $G$-codes over finite Frobenius rings. 
The class of higher genus weight enumerators in coding theory were widely 
studied in many papers, firstly over~$\FF_{q}$ and later over~$\ZZ_{k}$;
for instance, see~\cite{BDHO1999, CM2021, CMO2022, CMO2024, MO2019}.
In this study, we give a higher genus version of generalization of
Atsumi's MacWilliams identity for $G$-codes over finite Frobenius rings. 
Ozeki~\cite{Ozeki} introduced the notion of Jacobi polynomials for codes
in analogy with Jacobi theta series of lattices. 
Later, several forms of generalizations 
of the Jacobi polynomials for codes 
were studied in~\cite{CM2022, CMO2022, CMOT2023, CIT20xx}. 
We prove the Jacobi polynomials analogue of Atsumi's MacWilliams identity
for $G$-codes over finite Frobenius rings.
Many authors studied the properties of
lattices that constructed from codes. 
In particular, articles like as~\cite{BDHO1999, Munemasa, Runge}
described the relation between the weight enumerators of codes
and the theta series of lattices associated with the codes. 
Moreover, Bannai and Ozeki~\cite{BO1996} studied the Jacobi
analogue of this relation. 
In this paper, we study these two types of relation
for $G$-codes and associated $G$-lattices.

This paper is organised as follows.
In Section~\ref{Sec:Preli},
we recall preliminary definitions
and results on 
finite Frobenius rings, codes, and lattices.
In Section~\ref{Sec:Gcodes},
we give an analogue of Hayden's theorem (Theorem~\ref{Thm:ThetaH}).
We also give a useful result the duals of $G$-codes over finite Frobenius rings 
(Theorems~\ref{Thm:CodeThetaMatrix}). 
We also present the generalizations of Atsumi's MacWilliams identity 
for various types of weight polynomials for $G$-codes over finite Frobenius rings
(Theorems~\ref{Thm:EquivMacWilliams}, \ref{Thm:CompWeightMacWilliams}, \ref{Thm:gthCWEMacWilliams}, \ref{Thm:HarmMacWilliams}). 
In Section~\ref{Sec:Glattices},
we prove the $G$-lattice version of Hayden's theorem and Jacobi formula 
(Theorems~\ref{Thm:LatticeHayden}, \ref{Thm:GLatticeJacobiFormula}).
In Section~\ref{Sec:GcodetoGlattice},
we study the properties of $G$-lattices associated to $G$-codes
(Theorems~\ref{Thm:GcodeGlattice1}, \ref{Thm:GcodeGlattice2}, \ref{Thm:WeightEnumtoThetaSeries}).
In Section~\ref{Sec:JacobiPolyTheta}, 
we present the Jacobi analogues of some results in
previous sections~(Theorems~\ref{Thm:EquivJacobiMacWilliams}, \ref{Thm:JacPolytoThetaSeries}).

\section{Preliminaries}\label{Sec:Preli}

In this section, 
we briefly discussion the definitions and basic
properties of 
finite Frobenius rings, codes and lattices 
that are frequently used in this paper. 
For the detail discussions on these topics, 
we refer the readers 
to~\cite{CS1999, Dougherty, HP2003, Lam1999, Wood1999}.

\subsection{Finite Frobenius rings and its characters}\label{SubSec:FrobeniusRings}

Let $R$ be a finite ring with identity satisfying the associative property. 
Then $R$ is called a \emph{Frobenius ring} 
if it satisfies the following conditions:
\begin{itemize}
	\item [(i)]
	$R/\textrm{Rad}(R) \cong \textrm{Soc}(R_R)$ as a right $R$-module;
	
	\item [(ii)]
	$R/\textrm{Rad}(R) \cong \textrm{Soc}(_R R)$ as a left $R$-module.
\end{itemize}
Here $_R M$ (or $M_R$) to denotes a left (or right) $R$-module $M$. 
Recall the Jacobson radical of $R$, denoted by $\textrm{Rad}(R)$,
and the socal of an $R$-module $M$, denoted by $\textrm{Soc(M)}$; 
for detailed information on $\textrm{Rad}(R)$ and $\textrm{Soc(M)}$, see~\cite{Lam1999}.
The class of finite Frobenius rings include the ring $\ZZ_{k}$ 
of integers modulo~$k$, 
finite fields and more generally finite chain rings.

Let $M$ be an $R$-module over a finite ring~$R$. 
In this paper, we consider characters as homomorphism 
$\chi : M \to \CC^{\ast}$ from additive group~$M$ to the 
multiplicative group of non-zero complex numbers~$\CC^{\ast}$
rather than map into $\QQ/\ZZ$.
For an $R$-module~$M$, 
we denote~$\widehat{M} := \Hom_{\ZZ}(M,\CC^{\ast})$ 
the \emph{character module} of~$M$.
If $M$ is a left (resp., right) $R$-module, 
then $\widehat{M}$ is a right (resp., left) $R$-module
via the module action
\[
	^{r}\chi(x) = \chi(xr)
	\quad
	(\mbox{resp. } \chi^{r}(x) = \chi(rx)),
\] 
where $\chi \in \widehat{M}$, $r \in R$ and $x \in M$ (see~\cite{Wood1999}).
Now we the following characterization of finite Frobenius rings in terms of 
character modules.

\begin{thm}\cite[Theorem 3.10]{Wood1999}\label{Thm:Character}
	Let $R$ be a finite ring.
	Then the following statements are equivalent:
	\begin{itemize}
		\item [(i)]
		$R$ is a Frobenius ring.
		\item [(ii)]
		As a left module, $\widehat{R} \cong \prescript{}{R}{R}$.
		\item [(iii)]
		As a right module, $\widehat{R} \cong {R}_{R}$.
	\end{itemize}
\end{thm}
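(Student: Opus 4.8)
The plan is to treat $\widehat{(-)} = \Hom_{\ZZ}(-,\CC^{\ast})$ as a perfect contravariant duality between finite left and right $R$-modules and to extract the Frobenius conditions by tracking how it acts on radicals and socles. First I would record the formal properties of the functor: since any homomorphism from a finite abelian group into $\CC^{\ast}$ lands in the torsion subgroup $\QQ/\ZZ$, which is a divisible and hence injective $\ZZ$-module, the functor $\widehat{(-)}$ is exact. It interchanges finite left and right $R$-modules via the action recalled above, preserves cardinality, and satisfies $\widehat{\widehat{M}} \cong M$ naturally. Applying $\widehat{(-)}$ to an isomorphism $\widehat{R_R} \cong \prescript{}{R}{R}$ and invoking $\widehat{\widehat{R}} \cong R$ immediately yields $R_R \cong \widehat{\prescript{}{R}{R}}$, so (ii) and (iii) are equivalent with no further work.

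Next I would establish the interchange of radical and socle. For a submodule $N \le M$ put $N^{\perp} = \{\chi \in \widehat{M} : \chi(N) = 1\}$; exactness gives $N^{\perp} \cong \widehat{M/N}$ and $\widehat{M}/N^{\perp} \cong \widehat{N}$. Taking $N = \mathrm{Rad}(M)$, the quotient $M/\mathrm{Rad}(M)$ is semisimple, so $\mathrm{Rad}(M)^{\perp}$ is a semisimple submodule of $\widehat{M}$, and maximality forces it to equal $\mathrm{Soc}(\widehat{M})$. This produces
\[
\mathrm{Soc}(\widehat{M}) \cong \widehat{M/\mathrm{Rad}(M)}, \qquad \widehat{M}/\mathrm{Rad}(\widehat{M}) \cong \widehat{\mathrm{Soc}(M)}.
\]

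Write $\bar{R} = R/\mathrm{Rad}(R)$, which is a finite semisimple ring and hence Frobenius; in particular $\widehat{\bar{R}} \cong \bar{R}$ on each side, a fact I would verify directly from the Artin--Wedderburn decomposition $\bar{R} \cong \prod_{i} \Mat_{n_i}(\FF_{q_i})$. For (i)$\Rightarrow$(ii), the right-hand Frobenius condition $\mathrm{Soc}(R_R) \cong \bar{R}$ dualizes, via the second displayed isomorphism applied to $M = R_R$, to $\mathrm{Soc}(R_R)^{\widehat{\phantom{x}}} \cong \widehat{\bar{R}} \cong \bar{R}$, so that $\widehat{R}/\mathrm{Rad}(\widehat{R}) \cong \bar{R} = \prescript{}{R}{R}/\mathrm{Rad}(R)$ as left modules; since this top is cyclic, Nakayama lifts a generator to show $\widehat{R}$ is a cyclic left module, and the equality $|\widehat{R}| = |R|$ upgrades this to $\widehat{R_R} \cong \prescript{}{R}{R}$. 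For (ii)$\Rightarrow$(i), the first displayed isomorphism gives $\mathrm{Soc}(\prescript{}{R}{\widehat{R_R}}) \cong \widehat{\bar{R}} \cong \bar{R}$, and transporting along the hypothesised isomorphism $\widehat{R_R} \cong \prescript{}{R}{R}$ yields $\mathrm{Soc}(\prescript{}{R}{R}) \cong \bar{R}$, the left-hand Frobenius condition; the symmetric argument routed through (iii) supplies the right-hand one, so both halves of (i) hold.

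I expect the crux to be the passage from agreement of tops (respectively socles) to a genuine isomorphism of the regular modules in (i)$\Rightarrow$(ii): this is exactly where the cyclicity of $\widehat{R}$, the lifting through $\mathrm{Rad}(R)$ by Nakayama, and the cardinality count must be combined, and it implicitly encodes the fact that for finite rings the left and right Frobenius conditions coincide. The identity $\widehat{\widehat{M}} \cong M$ is what makes this symmetry fall out for free through the equivalence (ii)$\Leftrightarrow$(iii), rather than demanding a separate Nakayama-permutation analysis.
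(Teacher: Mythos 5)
The paper offers no proof of this statement---it is imported verbatim from Wood \cite[Theorem 3.10]{Wood1999}---so the comparison must be with Wood's original argument rather than anything in this manuscript. Your proposal is correct and reconstructs essentially that argument: the exact character duality $\Hom_{\ZZ}(-,\CC^{\ast})$ exchanging radical quotients and socles via the annihilator correspondence, self-duality of the semisimple quotient through Artin--Wedderburn, the cyclicity--Nakayama--cardinality step passing from an isomorphism of tops to $\widehat{R}\cong\prescript{}{R}{R}$, and the double-duality symmetry $(\mathrm{ii})\Leftrightarrow(\mathrm{iii})$ correctly supplying both halves of condition (i).
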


Let $R$ be a finite ring. A character~$\chi$ of $R$ is called a
right (resp. left) \emph{generating character} 
if the mapping 
$\phi : R \to \widehat{R}$, defined by 
$\phi(r) = \chi^{r}$
(resp. $\phi(r) = \prescript{r}{}{\chi}$)
is an isomorphism of right (resp. left) $R$-modules. 
It is immediate from Theorem~\ref{Thm:Character} that
a finite ring~$R$ is Frobenius if and only if 
it admits a right or a left generating character.

\begin{ex}
	Consider the ring $\ZZ_{k}$ of integers modulo~$k$ 
	for some positive integer $k \ge 2$.
	Let~$\zeta_{k} := e^{2{\pi}i/k}$ be the
	primitive $k$-th root of unity.
	Then $\chi(a) = \zeta_{k}^{a}$
	for $a \in \ZZ_{k}$
	is a generating character. 
\end{ex}

Let $R$ be a finite Frobenius ring. Let $\chi$ be a generating character
associate with~$R$. Then for any $a \in R$,  we have the following property:
\[
	\sum_{b \in R}
	\chi(ab) 
	= 
	\begin{cases}
		|R| & \mbox{if} \quad a = 0, \\
		0 & \mbox{if} \quad a \neq 0.
	\end{cases} 
\]
We refer the readers to~\cite{Dougherty, HL2001, Wood1999} for more discussions 
on Frobenius rings and its characters.

\subsection{Linear codes over finite Frobenius rings}\label{SubSec:LinearCodes}
Let $R$ be the finite Frobenius ring.
Then $V:=R^{n}$ 
denotes the free $R$-module with ordinary inner product
\[
	(u,v) 
	:= 
	u_{1}v_{1} + \cdots + u_{n}v_{n},
\]
where
$u = (u_{1},\ldots,u_{n}) \in V$ and $v = (v_{1},\ldots,v_{n}) \in V$.
The \emph{support} and \emph{weight} of an element $u \in V$
can be defined as follow:
\begin{align*}
	\supp(u) 
	& := \{i \in [n] \mid u_{i} \neq 0\},\\
	\wt(u) 
	& := |\supp(u)|.
\end{align*}

Let $R$ be the finite Frobenius ring. 
Then a \emph{left} (or \emph{right}) $R$-\emph{linear code} 
of length~$n$ is a left (or right) $R$-submodule of $V$.
The elements of a code are known as codewords. 
We denote the \emph{left dual code} of a right $R$-linear code~$C$ by $^{\perp}C$ 
and the \emph{right dual code} of a left $R$-linear code~$D$ by $D^{\perp}$, 
and define as follows:
\begin{align*}
	^{\perp}C
	:=
	\{
	u \in V
	\mid
	(u, v) = 0
	\mbox{ for all }
	v \in C
	\},\\
	D^{\perp}
	:=
	\{
	u \in V
	\mid
	(v, u) = 0
	\mbox{ for all }
	v \in D
	\}.
\end{align*}
It is immediate from the above definition that
$^\perp C$ is a left $R$-submodule of $V$,
and $D^\perp$ is a right $R$-submodule of $V$.
A code $C$ (resp. $D$) is called \emph{left self-dual} 
(resp. \emph{right self-dual}) if $C = {^\perp}C$ (resp. $D = D^{\perp}$).

\begin{df}\label{DefWeightEnumerator}
	Let $R$ be the finite Frobenius ring.
	Let $C$ be a left (or right) $R$-linear code of length~$n$. 
	Then the \emph{weight enumerator} of $C$ is
	defined as follows:
	\[
	W_{C}(x,y) 
	:=
	\sum_{{\bf u}\in C}
	x^{n-\wt({u})}
	y^{\wt({u})}
	=
	\sum^{n}_{i=0} 
	A_{i} x^{n-i} y^{i},
	\]
	where $A_{i} := \#\{u \in C \mid \wt(u) = i\}$.
\end{df}

The following MacWilliams-type identity holds 
for the left linear codes over the finite Frobenius rings. 
Similarly,
most of the results in the subsequent sections are stated for left linear codes. 
Each of these statements can be re-expressed with equal validity for right linear codes.

\begin{thm}[MacWilliams-type identity]\label{Thm:MacWilliams} 
	Let $R$ be a finite Frobenius ring, 
	and $C$ be a left $R$-linear code of length~$n$. 
	Then 
	\[
	W_{^{\perp}C}
	(x,y)
	= 
	\frac{1}{|C|} 
	W_{C} 
	\left( 
	x+(|R|-1)y, 
	x-y 
	\right).
	\]
\end{thm}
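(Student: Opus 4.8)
The plan is to prove this via the classical character-theoretic (Poisson summation) argument, now carried out over a finite Frobenius ring through a generating character. First I would fix a generating character $\chi$ of $R$, which exists by Theorem~\ref{Thm:Character}, and introduce the associated character-theoretic dual
\[
{}^{\perp}_{\chi}C := \{ u \in V \mid \chi((u,v)) = 1 \text{ for all } v \in C \}.
\]
For each fixed $u \in V$ the map $v \mapsto \chi((u,v))$ is an additive character of the finite abelian group $(C,+)$, so orthogonality of characters gives $\sum_{v \in C}\chi((u,v)) = |C|$ when $u \in {}^{\perp}_{\chi}C$ and $0$ otherwise. Writing $f(u) := x^{n-\wt(u)}y^{\wt(u)}$ and $\hat{f}(v) := \sum_{u \in V}\chi((u,v))\,f(u)$, interchanging the order of summation then yields the (purely formal) Poisson identity
\[
\sum_{u \in {}^{\perp}_{\chi}C} f(u) = \frac{1}{|C|}\sum_{v \in C}\hat{f}(v).
\]

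Next I would evaluate the transform $\hat{f}$. Since both $f$ and the bilinear form factor coordinatewise, $\hat{f}(v) = \prod_{i=1}^{n}\hat{g}(v_i)$, where $\hat{g}(b) = \sum_{a \in R}\chi(ab)\,g(a)$ with $g(0) = x$ and $g(a) = y$ for $a \neq 0$. Here the Frobenius hypothesis enters through the character sum $\sum_{a \in R}\chi(ab)$ recorded in Subsection~\ref{SubSec:FrobeniusRings}, which by the generating-character property equals $|R|$ for $b = 0$ and $0$ for $b \neq 0$; a one-line computation then gives $\hat{g}(0) = x + (|R|-1)y$ and $\hat{g}(b) = x - y$ for $b \neq 0$. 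Consequently $\hat{f}(v) = (x+(|R|-1)y)^{\,n-\wt(v)}(x-y)^{\,\wt(v)}$, so that $\sum_{v \in C}\hat{f}(v) = W_{C}(x+(|R|-1)y,\,x-y)$.

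The remaining and decisive step is to identify the character-theoretic dual with the algebraic one, namely ${}^{\perp}_{\chi}C = {}^{\perp}C$; this is precisely where the Frobenius hypothesis is indispensable, since the identity fails over rings admitting no generating character. The inclusion ${}^{\perp}C \subseteq {}^{\perp}_{\chi}C$ is immediate, as $(u,v) = 0$ forces $\chi((u,v)) = 1$. The reverse inclusion is the main obstacle: I would exploit that the kernel of a generating character contains no nonzero one-sided ideal. In the commutative case this is transparent: for $u \in {}^{\perp}_{\chi}C$ and $v \in C$ the submodule property gives $rv \in C$, so $r(u,v) = (u,rv)$ has $\chi$-trivial value for every $r \in R$; thus the ideal $R(u,v)$ lies in $\ker\chi$ and must vanish, giving $(u,v) = 0$. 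For a general finite Frobenius ring the same conclusion follows from the module duality theory for finite Frobenius rings, and the careful bookkeeping of the left/right module conventions is where the genuine work lies.

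Finally I would assemble the pieces. By the identification of duals, the left-hand side of the Poisson identity is $\sum_{u \in {}^{\perp}C} f(u) = W_{{}^{\perp}C}(x,y)$, while its right-hand side was computed to be $\frac{1}{|C|}W_{C}(x+(|R|-1)y,\,x-y)$. Equating the two gives the claimed MacWilliams-type identity.
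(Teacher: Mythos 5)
You have reproduced the standard argument: note first that the paper itself gives no proof of Theorem~\ref{Thm:MacWilliams} — it is quoted as known, with the attribution to Honold--Landjev~\cite{HL2001} (and Wood~\cite{Wood1999}) appearing just before Theorem~\ref{Thm:EquivMacWilliams} — and your generating-character/Poisson-summation route is exactly the proof behind that citation. Most of it is sound: the orthogonality step, the formal Poisson identity, and the evaluation $\hat{g}(0)=x+(|R|-1)y$, $\hat{g}(b)=x-y$ for $b\neq 0$ are all correct. (One small point: you need $\sum_{a\in R}\chi(ab)=0$ for $b\neq0$, a sum over the \emph{first} argument, whereas Subsection~\ref{SubSec:FrobeniusRings} records only the sum over the second; this is fine because a left generating character is automatically a right one, but it deserves a word.) Over a \emph{commutative} Frobenius ring your closing argument, that $R(u,v)\subseteq\ker\chi$ forces $(u,v)=0$, is complete, so your proof is fully correct for $\ZZ_k$, finite fields, chain rings, and in particular for every ring $R_0$ used later in the paper.

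The genuine gap is the deferred noncommutative case, and it is not mere left/right bookkeeping: with the sided convention you chose, the identification ${}^{\perp}_{\chi}C={}^{\perp}C$ is \emph{false}. Take $R=M_2(\FF_2)$ with generating character $\chi(A)=(-1)^{\tr(A)}$, $n=1$, and the left code $C=RE_{11}$ (matrices with zero second column, $|C|=4$). Then $\chi(u\,rE_{11})=(-1)^{\tr(E_{11}ur)}$ is trivial for all $r$ iff $E_{11}u=0$, so ${}^{\perp}_{\chi}C$ has $4$ elements; but $u\,rE_{11}=0$ for all $r$ forces $u=0$, so ${}^{\perp}C=\{0\}$. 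Correspondingly $W_{{}^{\perp}C}(x,y)=x$ while $\frac{1}{|C|}W_{C}(x+15y,\,x-y)=x+3y$, so no appeal to ``module duality theory'' can rescue the identification — and this also shows the theorem as printed carries the same sidedness slip, since Section~\ref{Sec:Preli} defines ${}^{\perp}(\cdot)$ only for \emph{right} codes; for a left code the identity holds with $C^{\perp}=\{u\mid (v,u)=0\ \text{for all } v\in C\}$. The repair is one line: with the compatible pairing, $rv\in C$ gives
\[
\chi\bigl((rv,u)\bigr)=\chi\bigl(r\,(v,u)\bigr)=\chi^{(v,u)}(r)
\quad\text{for all } r\in R,
\]
and injectivity of $s\mapsto\chi^{s}$ — the defining property of a right generating character via Theorem~\ref{Thm:Character} — yields $(v,u)=0$. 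With the pairing flipped this way, the rest of your argument goes through verbatim for every finite Frobenius ring.
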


\subsection{Lattices}

Let $\RR^{n}$ be the $n$-dimensional Euclidean space.
The \emph{inner product} of any two elements 
$u = (u_{1},\ldots,u_{n})$ and $v = (v_{1},\ldots,v_{n})$
can be defined as
\[
	\la u,v\ra
	:=
	u_{1}v_{1} + u_{2}v_{2}+\cdots+u_{n}v_{n}.
\]
An $n$-dimensional \emph{lattice} in $\RR^{n}$ is a subset 
$\Lambda \subseteq \RR^{n}$
with the property that there exists an $\RR$-basis
$({e}_{1},{e}_{2},\ldots,{e}_{n})$ of $\RR^{n}$
which is a $\ZZ$-basis of~$\Lambda$, that is,
\begin{equation*}\label{Equ:Lattice}
	\Lambda
	:=
	\ZZ {e}_{1}
	\oplus
	\ZZ {e}_{2}
	\oplus
	\cdots
	\oplus
	\ZZ {e}_{n}.
\end{equation*}
An $n \times n$ matrix~$M$ whose rows are the vectors 
$({e}_{1},{e}_{2}\ldots,{e}_{n})$ 
is called the \emph{generator matrix} of $\Lambda$.
The \emph{gram matrix} and the \emph{determinant} of a
lattice $\Lambda$ is defined as follows:
\begin{align*}
	\Gram(M) 
	& := 
	(\la {e}_{i}, {e}_{j}\ra) 
	=
	MM^{\tr},\\
	\det \Lambda 
	& := 
	|\det M|.
\end{align*}
A lattice~$\Lambda$ is called \emph{even} if
$\la u,u\ra \in 2\ZZ$
for all $u \in \Lambda$. 
The \emph{dual lattice} of $\Lambda$ is defined by
\[
	\Lambda^{\ast}
	:=
	\{
	u \in \RR^{n}
	\mid
	\la u,v\ra
	\in 
	\ZZ
	\mbox{ for all }
	v \in \Lambda 
	\}.
\]
A lattice~$\Lambda\subseteq \RR^{n}$ is called
\emph{integral} if $\Lambda \subseteq \Lambda^{\ast}$.
An integral lattice with $\Lambda = \Lambda^{\ast}$
is called \emph{unimodular}. 
	
\begin{df}
	Let $\Lambda$ be a lattice in $\RR^{n}$.
	Define 
	$\HH := \{z \in \CC \mid \rm{Im}(z) > 0\}$
	the upper half plane.
	Then the function $\Theta_{\Lambda} : \HH \to \CC$
	defined as follows 
	\[
		\Theta_{\Lambda}(z)
		:=
		\sum_{x \in \Lambda}
		q^{\la x, x \ra},
	\]
	where $q := e^{\pi i z}$ and $z \in \HH$,
	is called the \emph{theta series} of $\Lambda$.
\end{df}

The following Jacobi's formula hold for the theta series.

\begin{thm}[\cite{CS1999}]\label{Thm:JacobiFormula}
	Let $\Lambda$ be a lattice in $\RR^{n}$. 
	Then the following relation holds:
	\[
		\vartheta_{\Lambda^\ast}(z)
		=
		(\det \Lambda)^{1/2}
		(i/z)^{n/2}
		\vartheta_{\Lambda}(- 1/z).
	\]
\end{thm}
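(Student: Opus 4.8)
The plan is to derive the transformation law from the Poisson summation formula applied to a Gaussian. Recall that for a Schwartz function $f$ on $\RR^{n}$ and a full-rank lattice $\Lambda$ with dual $\Lambda^{\ast}$, Poisson summation reads
\[
  \sum_{x \in \Lambda} f(x)
  =
  \frac{1}{\det \Lambda}
  \sum_{\xi \in \Lambda^{\ast}} \widehat{f}(\xi),
  \qquad
  \widehat{f}(\xi) := \int_{\RR^{n}} f(x)\, e^{-2\pi i \la x,\xi\ra}\, dx,
\]
where the normalizing constant is the covolume of $\Lambda$. First I would fix $z \in \HH$ and take the Gaussian $f(x) = e^{\pi i z \la x,x\ra}$. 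Writing $s := -iz$, so that $\mathrm{Re}(s) = \Ima(z) > 0$, we have $f(x) = e^{-\pi s \la x,x\ra}$, which is Schwartz and makes $\Theta_{\Lambda}(z) = \sum_{x \in \Lambda} f(x)$ absolutely convergent; this legitimizes the use of Poisson summation.

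The next step is to compute the Fourier transform of $f$. Starting from the classical real-variable identity $\widehat{e^{-\pi s \la \cdot,\cdot\ra}}(\xi) = s^{-n/2} e^{-\pi \la \xi,\xi\ra / s}$ for $s > 0$, I would extend it to $\mathrm{Re}(s) > 0$ by analytic continuation, using the principal branch of $s^{-n/2}$. Substituting into Poisson summation gives
\[
  \Theta_{\Lambda}(z)
  =
  \frac{1}{\det \Lambda}\, s^{-n/2}
  \sum_{\xi \in \Lambda^{\ast}} e^{-\pi \la \xi,\xi\ra / s}.
\]
Since $z \in \HH$ forces $s$ into the right half-plane, $1/s = i/z$ also lies there, so $s^{-n/2} = (i/z)^{n/2}$ for the principal branch, while $e^{-\pi \la\xi,\xi\ra/s} = e^{\pi i (-1/z)\la\xi,\xi\ra}$. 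Hence the inner sum is exactly $\Theta_{\Lambda^{\ast}}(-1/z)$, and we obtain
\[
  \Theta_{\Lambda}(z)
  =
  \frac{1}{\det \Lambda}
  \left(\frac{i}{z}\right)^{n/2}
  \Theta_{\Lambda^{\ast}}(-1/z).
\]
Finally, I would interchange the roles of $\Lambda$ and $\Lambda^{\ast}$, using $(\Lambda^{\ast})^{\ast} = \Lambda$ together with the duality relation between the covolumes of $\Lambda$ and $\Lambda^{\ast}$, to arrive at the stated identity.

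I expect two points to require the most care. The first is the Fourier transform of the complex Gaussian: one must pin down the correct branch of $s^{-n/2}$ so that the continuation from $s>0$ is single-valued on the right half-plane and the resulting identity holds between analytic functions on all of $\HH$; tracking this branch is what produces the precise factor $(i/z)^{n/2}$ rather than an ambiguous $n$-th root. The second is the bookkeeping for the determinant factor: the covolume appears as the normalizing constant in Poisson summation, and reconciling the factor $1/\det\Lambda$ with the factor in the statement requires the relation $\det\Lambda \cdot \det\Lambda^{\ast} = 1$ and attention to whether $\det\Lambda$ is read as the covolume $|\det M|$ or as the Gram determinant $\det \Gram(M)$.
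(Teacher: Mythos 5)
Your Poisson-summation argument is correct, and since the paper gives no proof of this theorem (it is quoted from Conway--Sloane), there is nothing to diverge from: your route is precisely the standard one underlying the citation. Each step checks out: with $s=-iz$ one has $\mathrm{Re}(s)>0$ and $1/s=i/z$, the principal-branch identity $s^{-n/2}=(i/z)^{n/2}$ is legitimate because both $s$ and $1/s$ lie in the right half-plane, and $e^{-\pi\la\xi,\xi\ra/s}=e^{\pi i(-1/z)\la\xi,\xi\ra}$, so Poisson summation gives
\[
\Theta_{\Lambda}(z)
=
\frac{1}{\det\Lambda}\left(\frac{i}{z}\right)^{n/2}\Theta_{\Lambda^{\ast}}(-1/z)
\]
when $\det\Lambda$ denotes the covolume $|\det M|$.

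One point you flagged deserves a definite answer rather than a hedge. After swapping $\Lambda$ and $\Lambda^{\ast}$ and using $\det\Lambda^{\ast}=1/\det\Lambda$ for covolumes, your computation yields $\vartheta_{\Lambda^{\ast}}(z)=(\det\Lambda)\,(i/z)^{n/2}\vartheta_{\Lambda}(-1/z)$, with the covolume to the \emph{first} power. The exponent $1/2$ in the stated theorem is correct only under the Conway--Sloane convention, in which $\det\Lambda$ means the Gram determinant $\det\Gram(M)=(\det M)^{2}$, so that $(\det\Lambda)^{1/2}=|\det M|$. Since this paper explicitly defines $\det\Lambda:=|\det M|$, the theorem as printed is inconsistent with the paper's own definition; your derivation, carried to the end, exposes exactly this convention slip. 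So the determinant ``bookkeeping'' you worried about is not a gap in your argument but a defect in the statement, and your proof is otherwise complete and correct.
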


\section{Codes with group actions}\label{Sec:Gcodes}

Let $R$ be a finite Frobenius ring.
Let $G$ be a finite permutation group 
acting on the coordinates of~$V (=R^{n})$. 
Then $V$ is an $RG$-\emph{module} 
if there exists a mapping  
\[
	G \times V \to V;
	(g,v) \mapsto vg
\]
satisfying 
$vg \in V$ 
for all~$g \in G$ and $v \in V$.
Let~$v = (v_{1},\ldots,v_{n}) \in V$ and $g \in G$.
We define the action of $G$ on~$V$ as follows:
\[
	vg
	:=
	(x_{1},\ldots,x_{n})
	\mbox{ such that }
	x_{i} = v_{ig^{-1}}
	\mbox{ for }
	i = 1,\ldots,n.
\]
In this way, 
$V$ becomes an {free} $RG$-module. 
A $G$-\emph{code} is an left (or right) $RG$-submodule of $V$.
Now we have the following finite Frobenius ring analogue of 
Hayden's theorem~\cite[Theorem 4.2]{BrHalHay1981}. 
The proof of the following theorem is similar 
to Hayden's proof. So we omit it. 

\begin{thm}\label{Thm:ThetaH}
	Let $R$ be a finite Frobenius ring. 
	Assume that $G$ is a finite permutation group on 
	the coordinates of $R^{n}$.
	For any subgroup~$H$ of~$G$
	such that $|H|$ has an inverse in~$R$,
	we define an operator:
	\[
		\theta_{H}
		:=
		\frac{1}{|H|}
		\sum_{h\in H}
		h.
	\] 
	If $C$ is a left $G$-code of length~$n$ over $R$, 
	then we have
	\[
		^{\perp}(C\theta_{H})
		=
		\ker\theta_{H} 
		\oplus 
		{^\perp}C
		\theta_{H}.
	\]
\end{thm}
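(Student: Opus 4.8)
The plan is to exploit the fact that $\theta_{H}$ is a self-adjoint idempotent on $V$. First I would record two elementary properties of $\theta_{H}$. Since $|H|$ is invertible in $R$, a direct computation gives $\theta_{H}^{2} = \frac{1}{|H|^{2}}\sum_{h,h'\in H} hh' = \theta_{H}$, so $\theta_{H}$ is idempotent; because permutation of coordinates commutes with the left $R$-action, $\theta_{H}$ is moreover a left $R$-linear endomorphism of $V$. Consequently $V = \Ima\theta_{H} \oplus \ker\theta_{H}$, with $\Ima\theta_{H}$ equal to the fixed space $\{v \in V \mid v\theta_{H} = v\}$. Next I would establish self-adjointness with respect to the inner product. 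Reindexing the sum shows $(ug, v) = (u, vg^{-1})$ for every $g \in G$ (this holds even when $R$ is noncommutative, since the reindexing never transposes a product $u_{j}v_{jg}$); averaging over $H$ and using $H = H^{-1}$ then yields $(u\theta_{H}, v) = (u, v\theta_{H})$ for all $u, v \in V$.

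From self-adjointness I would deduce that $\ker\theta_{H}$ and $\Ima\theta_{H}$ are mutually orthogonal: for $u \in \ker\theta_{H}$ and $w = w'\theta_{H} \in \Ima\theta_{H}$ one has $(u, w) = (u, w'\theta_{H}) = (u\theta_{H}, w') = 0$. I would also check that ${}^{\perp}C$ is $G$-invariant: if $u \in {}^{\perp}C$ and $g \in G$, then $(ug, c) = (u, cg^{-1}) = 0$ for all $c \in C$, because $C$, being a $G$-code, is closed under the $G$-action; hence ${}^{\perp}C$ is in particular $H$-invariant, which gives ${}^{\perp}C\,\theta_{H} \subseteq {}^{\perp}C$.

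The heart of the argument is the identity ${}^{\perp}(C\theta_{H}) = \{u \in V \mid u\theta_{H} \in {}^{\perp}C\}$. Indeed, $u \in {}^{\perp}(C\theta_{H})$ means $(u, c\theta_{H}) = 0$ for all $c \in C$, and by self-adjointness this is equivalent to $(u\theta_{H}, c) = 0$ for all $c \in C$, that is, to $u\theta_{H} \in {}^{\perp}C$. Writing $u = u_{0} + u_{1}$ along $V = \ker\theta_{H} \oplus \Ima\theta_{H}$, one has $u\theta_{H} = u_{1}$, so the condition becomes $u_{1} \in {}^{\perp}C \cap \Ima\theta_{H}$ with $u_{0} \in \ker\theta_{H}$ arbitrary. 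This proves ${}^{\perp}(C\theta_{H}) = \ker\theta_{H} \oplus ({}^{\perp}C \cap \Ima\theta_{H})$.

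It then remains to identify ${}^{\perp}C \cap \Ima\theta_{H} = {}^{\perp}C\,\theta_{H}$. The inclusion $\supseteq$ follows from ${}^{\perp}C\,\theta_{H} \subseteq \Ima\theta_{H}$ together with ${}^{\perp}C\,\theta_{H} \subseteq {}^{\perp}C$ established above; for $\subseteq$, any $w \in {}^{\perp}C \cap \Ima\theta_{H}$ satisfies $w = w\theta_{H} \in {}^{\perp}C\,\theta_{H}$. Substituting gives the claim, and the sum stays direct since ${}^{\perp}C\,\theta_{H} \subseteq \Ima\theta_{H}$ meets $\ker\theta_{H}$ trivially. I expect the only delicate point to be the bookkeeping of left versus right module structures over a possibly noncommutative Frobenius ring: one must verify that ${}^{\perp}C$, $\ker\theta_{H}$, and ${}^{\perp}C\,\theta_{H}$ are all left $R$-submodules, so that the asserted decomposition is an equality of left $R$-modules. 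The algebraic core (idempotency, self-adjointness, orthogonality) is otherwise identical to Hayden's field-theoretic argument.
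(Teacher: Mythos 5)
Your proof is correct and follows essentially the same route as the paper's (omitted) proof, namely Hayden's original argument: idempotency and self-adjointness of $\theta_{H}$, the $H$-invariance of ${}^{\perp}C$, and the reverse inclusion via the splitting along $V = \ker\theta_{H} \oplus \Ima\theta_{H}$, which is exactly Hayden's decomposition $y = (y - y\theta_{H}) + (y\theta_{H})\theta_{H}$ repackaged. Your explicit remarks on the noncommutative bookkeeping (that the pairing identity $(ug,v) = (u,vg^{-1})$ never transposes products, and that $|H|^{-1}$ is central) are exactly the points that make the field-theoretic argument carry over to Frobenius rings.
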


\begin{cor}
	Let $R$ be a finite Frobenius ring. 
	Assume that $G$ is a finite permutation group on 
	the coordinates of $R^{n}$
	provided that~$|G|$ has an inverse in~$R$.
	We define an operator:
	\[
		\theta
		:=
		\frac{1}{|G|}
		\sum_{g\in G}
		g.
	\] 
	If $C$ is a $G$-code of length~$n$ over $R$, 
	then we have
	\[
		^{\perp}(C\theta)
		=
		\ker\theta 
		\oplus 
		{^\perp}C
		\theta.
	\]
\end{cor}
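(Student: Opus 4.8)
The plan is to obtain this corollary as the special case of Theorem~\ref{Thm:ThetaH} in which the subgroup $H$ is taken to be the full group $G$. Since $G$ is trivially a subgroup of itself, the hypothesis of Theorem~\ref{Thm:ThetaH} that $|H|$ be invertible in $R$ reduces, upon setting $H = G$, to the assumption of the corollary that $|G|$ has an inverse in~$R$. Consequently the operator $\theta_{G} = \frac{1}{|G|}\sum_{g \in G} g$ from the theorem coincides verbatim with the operator $\theta$ defined in the corollary.

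With this identification in hand, the remaining steps are purely formal. First I would confirm that the $G$-code $C$ in the corollary satisfies the standing hypotheses of Theorem~\ref{Thm:ThetaH}, namely that it is a left $G$-code over~$R$, the right-module version being handled symmetrically, as the paper notes for its other results. Then, invoking Theorem~\ref{Thm:ThetaH} with $H = G$ gives
\[
	^{\perp}(C\theta_{G})
	=
	\ker\theta_{G}
	\oplus
	{^\perp}C\theta_{G},
\]
and substituting $\theta_{G} = \theta$ yields exactly the asserted decomposition $^{\perp}(C\theta) = \ker\theta \oplus {^\perp}C\theta$.

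I do not expect a genuine obstacle here, since the corollary is a direct specialization rather than an independent result. The only point requiring a moment's care is verifying that the whole group $G$ does indeed meet the subgroup condition of the theorem, which is immediate: every group is a subgroup of itself, and the invertibility hypothesis transfers without change. In short, the proof is a one-line appeal to Theorem~\ref{Thm:ThetaH} with the choice $H = G$.
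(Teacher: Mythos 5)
Your proposal is correct and matches the paper's intent exactly: the corollary is stated immediately after Theorem~\ref{Thm:ThetaH} with no separate proof, precisely because it is the specialization $H = G$, where the invertibility of $|G|$ in $R$ is the hypothesis of the theorem applied to the subgroup $H=G$ and $\theta_G$ coincides with $\theta$. Nothing further is needed.
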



Let $R$ be a finite Frobenius ring.
Let $\theta_{H}$ be the operator as described
in Theorem~\ref{Thm:ThetaH}.
Then for any $v \in V (= R^{n})$, we have
\[
	v\theta_{H}
	=
	\dfrac{1}{|H|}
	\sum_{h \in H}
	vh.
\]
Define 
$V\theta_{H} := \{v\theta_{H} \mid v \in V\}$.
Clearly, $V\theta_{H}$ is a left (or right) $R$-submodule of~$V$. 
Let
$H_{V}(\alpha_{1}),\ldots,H_{V}(\alpha_{t})$ 
be the orbits of the coordinates of $V$ under the action of $H$.
Let $m_{i}$ be the length of $H_{V}(\alpha_{i})$.
Therefore, assume that $H_{V}(\alpha_{i}) = \{a_{k} \in [n]\mid k = 1,\ldots,m_{i}\}$.
Now we form a diagonal matrix, 
$M_{H} := \diag(\ell_{1},\ell_{2},\ldots,\ell_{n})$ 
of order~$n$ corresponding to~$H$, where
$\ell_{j} = m_{i}$ if $j \in H_{V}(\alpha_{i})$.
We call this matrix as~\emph{$H$-orbit length matrix}. 
Define $\overline{H_{V}(\alpha_{i})} := (y_{1},\ldots,y_{n})$
such that
\[
	y_{j}
	:=
	\begin{cases}
		1 & \mbox{if } j = a_{k} \mbox{ for some } k;\\
		0 & \mbox{otherwise}.
	\end{cases}	
\]
Therefore, each element~$u \in V\theta_{H}$ 
can be written in the following form:
\begin{equation}\label{Equ:torbit}
	u	
	= 
	\sum_{i = 1}^{t}	
	u_{i} \overline{H_{V}(\alpha_{i})}.
\end{equation}
We call $\supp_{H}(u)$ the $H$-\emph{support} of 
$u \in V\theta_{H}$ is the set of $i$'s such 
that~$u_{i} \neq 0$ when
$u$ is written in form~(\ref{Equ:torbit}).
Then $\wt_{H}(u) = |\supp_{H}(u)|$ is the $H$-\emph{weight} 
of $u \in V\theta_{H}$.
If $H$ is trivial, 
the $H$-support and $H$-weight of an element in
$V\theta_{H}$ is the ordinary support and weight of it, respectively.
The $H$-\emph{inner product} of any two elements 
$u = \sum_{i =  1}^{t} u_{i}\overline{H_{V}(\alpha_{i})}$,
$v = \sum_{i =  1}^{t} v_{i}\overline{H_{V}(\alpha_{i})}$
of $V\theta_{H}$,
can be defined as:
\[
	(u,v)_{H}
	=
	\sum_{i=1}^{t}
	u_{i}v_{i}.
\]
In respect to the $H$-inner product, 
the \emph{left $H$-dual} of a right $R$-submodule~$C$ 
in $V\theta_{H}$ and the \emph{right $H$-dual} of a left 
$R$-submodule~$D$ in $V\theta_{H}$ 
are denoted by $^{\perp_{H}}C$ and $D^{\perp_{H}}$, 
respectively, 
and defined as follows:
\begin{align*}
	^{\perp_{H}}C
	& :=
	\{
	v \in V\theta_{H}
	\mid
	(v,u)_{H} =0
	\text{ for all }
	u \in C
	\}\\
	D^{\perp_{H}}
	& :=
	\{
		v \in V\theta_{H}
		\mid
		(u,v)_{H} =0
		\text{ for all }
		u \in D
	\}.
\end{align*}
If $H$ consists only the identity element, 
then the above mentioned left (resp. right) dual coincide with the ordinary 
left (resp. right) dual of a left (resp. right) $R$-linear code in $V$.

\subsection{Weight enumerators and their MacWilliams identity}

\begin{df}
	Let $R$ be a finite Frobenius ring.
	Assume that $G$ is a finite permutation group on 
	the coordinates of $V (= R^{n})$.
	Let~$H$ be a subgroup of~$G$
	satisfying $|H|$ has an inverse in~$R$. 
	Let $D$ be a left (or right) $R$-submodule in~$V\theta_{H}$.
	Then the \emph{$H$-weight enumerator} of $D$ 
	is defined as:
	\[
		W_{D}^{H}(x,y)
		:=
		\sum_{u\in D}
		x^{t-\wt_{H}(u)}
		y^{\wt_{H}(u)},
	\]
	where $t$ is the number of orbits in the coordinates of $V$ 
	under the action of $H$.
	If $D = C\theta_{H}$ for any left (or right) $G$-code $C$ of 
	length~$n$ over~$R$,
	we prefer to call 
	$W_{C\theta_{H}}^{H}$ as $H$-equivariant weight enumerator of~$C$. 
	Obviously, if $H$ is trivial, then the $H$-equivariant weight enumerator 
	coincide with the ordinary weight enumerator.
	Moreover, if $|G|$ has an inverse in~$R$, then we call $W_{C\theta}^{G}$
	the $G$-equivariant weight enumerator of~$C$.
\end{df}

Honold et al.~\cite{HL2001} first gave the MacWilliams identity 
for the codes over a finite Frobenius ring~$R$. 
Here we can extend the identity for the~$G$-codes over~$R$. 

\begin{thm}[MacWilliams Identity]\label{Thm:EquivMacWilliams}
	Let $R$ be a finite Frobenius ring. 
	Assume that $G$ is a finite permutation group on 
	the coordinates of $R^{n}$.
	Let~$H$ be a subgroup of~$G$
	such that $|H|$ has an inverse in~$R$.
	If~$C$ is a $G$-code of length~$n$ over~$R$,
	then we have the following relation:
	\[
		W_{^{\perp}C\theta_{H}}^{H}
		(x,y)
		=
		\dfrac{1}{|C\theta_{H}|}
		W_{C\theta_{H}}^{H}
		(x+(|R|-1)y,x-y),
	\]
	where $M_{H}$ is the $H$-orbit length matrix.
\end{thm}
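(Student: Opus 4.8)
The plan is to reduce the statement to the ordinary MacWilliams identity of Theorem~\ref{Thm:MacWilliams}, applied not in $V=R^{n}$ but in the free module $R^{t}$, where $t$ is the number of $H$-orbits on coordinates. The starting point is the $R$-module isomorphism $\pi\colon V\theta_{H}\xrightarrow{\ \sim\ }R^{t}$ that sends $u=\sum_{i=1}^{t}u_{i}\,\overline{H_{V}(\alpha_{i})}$ to $(u_{1},\dots,u_{t})$; this is well defined because an element of $V\theta_{H}$ is exactly an $H$-invariant vector, hence constant on each orbit. By construction $\pi$ carries $\wt_{H}$ to the ordinary Hamming weight on $R^{t}$ and $(\cdot,\cdot)_{H}$ to the standard inner product, so for any $R$-submodule $D\subseteq V\theta_{H}$ we get $W_{D}^{H}(x,y)=W_{\pi(D)}(x,y)$ and $\pi\bigl({}^{\perp_{H}}D\bigr)=\pi(D)^{\perp}$. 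Thus the whole theorem follows from Theorem~\ref{Thm:MacWilliams} applied to $\pi(C\theta_{H})\subseteq R^{t}$, \emph{provided} one establishes the duality identification
\[
(^{\perp}C)\theta_{H}={}^{\perp_{H}}(C\theta_{H}).
\]
This equality is the heart of the matter, and everything else is transport of structure along $\pi$ together with $|C\theta_{H}|=|\pi(C\theta_{H})|$.

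To prove the duality I would first dispose of cardinalities, where Theorem~\ref{Thm:ThetaH} does the work. Since $\theta_{H}$ is a projection, $V=V\theta_{H}\oplus\ker\theta_{H}$, and comparing orders gives $|\ker\theta_{H}|=|R|^{\,n-t}$. Feeding $x=|R|,\,y=0$ into Theorem~\ref{Thm:MacWilliams} yields the Frobenius duality relation $|D|\cdot|{}^{\perp}D|=|R|^{\ell}$ for any code of length $\ell$ over $R$. Applying this in $V$ to $C\theta_{H}$ and using Theorem~\ref{Thm:ThetaH} in the form ${}^{\perp}(C\theta_{H})=\ker\theta_{H}\oplus(^{\perp}C)\theta_{H}$, one finds
\[
|C\theta_{H}|\cdot|\ker\theta_{H}|\cdot|(^{\perp}C)\theta_{H}|=|R|^{\,n},
\]
hence $|C\theta_{H}|\cdot|(^{\perp}C)\theta_{H}|=|R|^{\,t}$. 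Comparing with Frobenius duality in $R^{t}$ gives $|(^{\perp}C)\theta_{H}|=|{}^{\perp_{H}}(C\theta_{H})|$, so it suffices to prove a single inclusion, say $(^{\perp}C)\theta_{H}\subseteq{}^{\perp_{H}}(C\theta_{H})$.

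For that inclusion I would exploit that $\theta_{H}$ is a \emph{self-adjoint} idempotent for the ambient inner product on $R^{n}$: permutation matrices satisfy $g^{\tr}=g^{-1}$ and $\sum_{h\in H}h=\sum_{h\in H}h^{-1}$, so $\theta_{H}^{\tr}=\theta_{H}$ and $(u\theta_{H},v)=(u,v\theta_{H})$, exactly as in the (omitted) proof of Theorem~\ref{Thm:ThetaH}. Because $C$ is $G$-stable it is $H$-stable, so $c\theta_{H}\in C$; hence for $w\in{}^{\perp}C$ and $c\in C$ one gets $(w\theta_{H},c\theta_{H})=(w,c\theta_{H})=0$ in the ambient pairing. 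The main obstacle is the final bridge: the ambient inner product and the $H$-inner product do \emph{not} coincide on $V\theta_{H}$ but differ precisely by the orbit-length weighting recorded in the matrix $M_{H}$, since on orbit coefficients $(u,v)=\sum_{i}m_{i}u_{i}v_{i}$ while $(u,v)_{H}=\sum_{i}u_{i}v_{i}$. Reconciling these two pairings---equivalently, checking that the orthogonality just obtained transfers across the diagonal rescaling by $M_{H}$---is the delicate step that the cardinality count alone does not see, and it is where I would expect to spend the real effort. Once the inclusion is secured, equality follows from the order count above, and Theorem~\ref{Thm:MacWilliams} in $R^{t}$ delivers the claimed transform $W_{C\theta_{H}}^{H}\bigl(x+(|R|-1)y,\,x-y\bigr)/|C\theta_{H}|$.
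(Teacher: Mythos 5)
Your proof hinges on the duality identification $({}^{\perp}C)\theta_{H}={}^{\perp_{H}}(C\theta_{H})$, and this identity is simply false; the paper's Example~\ref{Ex:Z4Code} is an explicit counterexample. There, over $\ZZ_{4}$ with $G=\langle (1,2,3)(4)\rangle$ and $C$ self-dual, one has ${}^{\perp}C\theta=C\theta=\{0000,1113,3331,2222\}$, while ${}^{\perp_{G}}(C\theta)=\{0000,1111,2222,3333\}$: in orbit coordinates $1113=(1,3)$ and $(1113,1113)_{G}=1\cdot 1+3\cdot 3=2\neq 0$ in $\ZZ_{4}$, so $1113\in({}^{\perp}C)\theta$ but $1113\notin{}^{\perp_{G}}(C\theta)$. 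Consequently the ``delicate step'' you defer at the end---reconciling the ambient pairing with the $H$-pairing---cannot be closed in the form you state: the $M_{H}$ twist between the two pairings cannot be removed. The correct statement is the paper's Theorem~\ref{Thm:CodeThetaMatrix}, ${}^{\perp_{H}}(C\theta_{H})=({}^{\perp}C\theta_{H})M_{H}$, with the orbit-length matrix present.

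That said, you are close, and your own computations repair the argument once the target is corrected. Your ambient orthogonality $(w\theta_{H},c\theta_{H})=0$ reads $\sum_{i}m_{i}w_{i}c_{i}=0$ in orbit coordinates, i.e.\ $\bigl((w\theta_{H})M_{H},\,c\theta_{H}\bigr)_{H}=0$, which is exactly the twisted inclusion $({}^{\perp}C\theta_{H})M_{H}\subseteq{}^{\perp_{H}}(C\theta_{H})$. Your cardinality count is sound: each orbit length $m_{i}$ divides $|H|$, hence is a unit in $R$, so $v\mapsto vM_{H}$ is a bijection and $|({}^{\perp}C\theta_{H})M_{H}|=|({}^{\perp}C)\theta_{H}|=|R|^{t}/|C\theta_{H}|=|{}^{\perp_{H}}(C\theta_{H})|$, upgrading the inclusion to equality---this in fact gives an alternative proof of Theorem~\ref{Thm:CodeThetaMatrix}, where the paper instead proves both inclusions directly. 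The one further observation you need, and which the paper uses explicitly, is that rescaling orbit coordinates by units preserves $\wt_{H}$, so $W^{H}_{({}^{\perp}C\theta_{H})M_{H}}=W^{H}_{{}^{\perp}C\theta_{H}}$; combining this with the ordinary MacWilliams identity transported to $R^{t}$ along your isomorphism $\pi$ yields the theorem. In short, the identity survives not because the two duals coincide---they do not---but because they differ by the unit diagonal $M_{H}$ and hence share the same $H$-weight enumerator.
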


If $C$ is a $G$-code of length~$n$ over~$\FF_{q}$
and $H = G$ provided $|G|$ has an inverse in~$\FF_{q}$, 
then the above theorem coincide with the MacWilliams identity given
in Atsumi~\cite[Theorem 1]{Atsumi1995}.
To prove the above MacWilliams identity, 
we need to show the following result.

\begin{thm}\label{Thm:CodeThetaMatrix}
	Let $R$ be a finite Frobenius ring.  
	Assume that $G$ is a finite permutation group on 
	the coordinates of $V (=R^{n})$. 
	Let	$H$ be a subgroup of~$G$ such that
	$|H|$ has an inverse in~$R$.
	If $C$ is a $G$-code of length~$n$ over $R$,
	then we have
	\[
		^{\perp_{H}}(C\theta_{H}) 
		=
		(^{\perp}C\theta_{H})M_{H},
	\]
	where $M_{H}$ is the $H$-orbit length matrix.
\end{thm}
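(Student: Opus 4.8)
The plan is to establish the two inclusions $({^{\perp}}C\,\theta_{H})M_{H} \subseteq {^{\perp_{H}}}(C\theta_{H})$ and ${^{\perp_{H}}}(C\theta_{H}) \subseteq ({^{\perp}}C\,\theta_{H})M_{H}$ by direct computation, after recording three structural facts. First, viewing each $g \in G$ as the permutation matrix acting on the right of $V$, the operator $\theta_{H}$ is a \emph{self-adjoint idempotent}: since $h^{\tr} = h^{-1}$ we get $\theta_{H}^{\tr} = \theta_{H}$, and a short sum manipulation gives $\theta_{H}^{2} = \theta_{H}$; in particular $\theta_{H}$ fixes $V\theta_{H}$ pointwise. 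Second, I would record the compatibility between the two inner products: for $a,b \in V\theta_{H}$, writing each in the orbit form~\eqref{Equ:torbit}, one has $(a,b) = \sum_{i=1}^{t} m_{i}a_{i}b_{i} = (aM_{H},b)_{H} = (a,bM_{H})_{H}$, because the ordinary inner product counts each orbit $m_{i}$ times while the $H$-inner product counts it once, and $M_{H}$ scales the $i$-th orbit coordinate by $m_{i}$. Third --- and this is the point that makes $M_{H}$ usable --- each orbit length $m_{i}$ is a \emph{unit} in $R$: by orbit--stabiliser $m_{i}$ divides $|H|$, so $m_{i}\cdot(|H|/m_{i}) = |H|$, and since integer multiples of $1_{R}$ are central and $|H|$ is invertible in $R$, the central element $m_{i}1_{R}$ is a unit. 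Hence $M_{H}$ is invertible over $R$, with $M_{H}^{-1} = \diag(m_{i}^{-1})$ again constant on orbits, so that $M_{H}$ and $M_{H}^{-1}$ map $V\theta_{H}$ bijectively onto itself.

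For the forward inclusion, take $w \in {^{\perp}}C$ and any $c \in C$. Using the inner-product compatibility with $a = w\theta_{H}$ and $b = c\theta_{H}$, then self-adjointness and idempotency of $\theta_{H}$, and finally the $G$-invariance of $C$, I would compute
\[
 \big((w\theta_{H})M_{H},\, c\theta_{H}\big)_{H}
 = (w\theta_{H},\, c\theta_{H})
 = (w,\, c\theta_{H})
 = \frac{1}{|H|}\sum_{h\in H}(w, ch)
 = 0,
\]
since each $ch \in C$ and $w \in {^{\perp}}C$. This shows $w\theta_{H}M_{H} \in {^{\perp_{H}}}(C\theta_{H})$.

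For the reverse inclusion, take $v \in {^{\perp_{H}}}(C\theta_{H})$ and set $v' := vM_{H}^{-1} \in V\theta_{H}$, so that $v'\theta_{H} = v'$. The aim is to show $v' \in {^{\perp}}C$, for then $v' = v'\theta_{H} \in {^{\perp}}C\,\theta_{H}$ and $v = v'M_{H} \in ({^{\perp}}C\,\theta_{H})M_{H}$. For $c \in C$ I would run the computation in reverse: self-adjointness together with $v'\theta_{H}=v'$ gives $(v',c) = (v', c\theta_{H})$; the compatibility identity with $a=v'$ and $b=c\theta_{H}$ turns this into $(v'M_{H}, c\theta_{H})_{H} = (v, c\theta_{H})_{H}$; and this vanishes because $c\theta_{H} \in C\theta_{H}$ and $v \in {^{\perp_{H}}}(C\theta_{H})$. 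Hence $v' \in {^{\perp}}C$, which completes the inclusion and the proof.

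The routine but essential care is the left/right bookkeeping: $C$ must be read as a (right) $G$-code so that $^{\perp}C$ and $^{\perp_{H}}(\cdot)$ are the correct one-sided duals, and one must check that every inner-product rearrangement only commutes the central scalars $m_{i}$ past ring elements, never two arbitrary elements of $R$. The genuine obstacle, however, is the third structural fact above: without knowing that the orbit lengths $m_{i}$ are units, neither $M_{H}^{-1}$ nor the reverse inclusion is available, so the hypothesis that $|H|$ is invertible in $R$ must be leveraged exactly at that point.
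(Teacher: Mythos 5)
Your proof is correct, and it takes a somewhat different route from the paper's. Both arguments share the same skeleton: two inclusions, mediated by the compatibility identity $(a,b)=\sum_{i} m_{i}a_{i}b_{i}=(aM_{H},b)_{H}$ for $a,b\in V\theta_{H}$, with the reverse inclusion obtained by dividing orbit coordinates by the $m_{i}$. But where the paper leans on Theorem~\ref{Thm:ThetaH} at both junctures --- in the forward direction it gets $(y,x)=0$ from ${}^{\perp}C\theta_{H}\subseteq{}^{\perp}(C\theta_{H})$, and in the reverse direction it shows $x'\in{}^{\perp}(C\theta_{H})$ and then invokes the decomposition ${}^{\perp}(C\theta_{H})=\ker\theta_{H}\oplus{}^{\perp}C\theta_{H}$ together with $x'\in V\theta_{H}$ to conclude $x'\in{}^{\perp}C\theta_{H}$ --- you bypass that theorem entirely, working from the fact that $\theta_{H}$ is a self-adjoint idempotent: your forward computation reduces to averaging $(w,ch)$ over $h\in H$, and your reverse computation shows $v'=vM_{H}^{-1}$ lies in ${}^{\perp}C$ itself, whence $v'=v'\theta_{H}\in{}^{\perp}C\theta_{H}$ directly. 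This makes your proof self-contained (it effectively re-derives the fragment of the Hayden-type theorem it needs), at the cost of redoing work the paper has already banked. A genuine merit of your version is the third structural fact: the paper silently writes $x_{i}/m_{i}$, tacitly assuming each orbit length is invertible in $R$, whereas you justify this via orbit--stabiliser ($m_{i}$ divides $|H|$, and a central divisor of the unit $|H|\cdot 1_{R}$ is a unit), which is exactly where the hypothesis on $|H|$ enters; making that explicit closes a small gap in the paper's own exposition. Your caution about one-sided duals is also warranted, since the paper states Theorem~\ref{Thm:ThetaH} for left $G$-codes while using the left dual ${}^{\perp}(\cdot)$ it defined for right codes; this looseness is the paper's, not yours, and your bookkeeping remark resolves it correctly.
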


\begin{proof}
	Let 
	$H_{V}(\alpha_{1}),\ldots,H_{V}(\alpha_{t})$ be the orbits of the coordinates of $V$ 
	under the action of $H$. 
	Then the elements of $V\theta_{H}$ can be written as:
	\begin{align*}
		{x} 
		& = 
		\sum_{i = 1}^{t}
		x_{i}
		\overline{H_{V}(\alpha_{i})}
		\in 
		C\theta_{H},\\
		{y} 
		& = 
		\sum_{i = 1}^{t}
		y_{i}
		\overline{H_{V}(\alpha_{i})}
		\in 
		{^\perp} C\theta_{H}.
	\end{align*}
	Let $m_{i}$ be the orbit length of $H_{V}(\alpha_{i})$.
	Then by Theorem~\ref{Thm:ThetaH}, we have
	\begin{equation*}
		0 
		= 
		({y},{x})
		=
		\sum_{i = 1}^{t}
		m_{i}y_{i} x_{i}
		=
		\sum_{i = 1}^{t}
		y_{i}^{\prime}x_{i} 
		=
		({y}^{\prime},{x})_{H},
	\end{equation*}
	where
	$y_{i}^{\prime} = m_{i}y_{i}$
	and
	\[
		{y}^{\prime}
		=
		\sum_{i = 1}^{t}
		y_{i}^{\prime}\overline{H_{V}(\alpha_{i})}.
	\]
	Now we can write
	\[
		{y}^{\prime}
		=
		\sum_{i = 1}^{t}
		m_{i}y_{i}\overline{H_{V}(\alpha_{i})}
		=
		{y}M_{H}.
	\]
	This implies
	\begin{equation}\label{Equ:MGsubset1}
		(^{\perp}C\theta_{H})M_{H}
		\subseteq
		{^{\perp_{H}}(C\theta_{H})}.
	\end{equation}
	Now we need to show that
	$^{\perp_{H}}(C\theta_{H}) \subseteq (^{\perp}C\theta_{H})M_{H}$.
	To do so, let 
	\begin{align*}
		{x} 
		& = 
		\sum_{i = 1}^{t}
		x_{i}
		\overline{H_{V}(\alpha_{i})}
		\in 
		{^{\perp_{H}}(C\theta_{H})},\\
		{y} 
		& = 
		\sum_{i = 1}^{t}
		y_{i}
		\overline{H_{V}(\alpha_{i})}
		\in 
		C\theta_{H}.
	\end{align*}
	For 
	${x}^{\prime} = \sum_{i = 1}^{t} (x_{i}/m_{i})\overline{H_{V}(\alpha_{i})} \in V\theta_{H}$,
	\[
		({x}^{\prime},{y})
		=
		\sum_{i=1}^{t}
		m_{i}
		(x_{i}/m_{i})
		y_{i}
		=
		\sum_{i=1}^{t}
		x_{i}
		y_{i}
		=
		({x},{y})_{H}
		=
		0.
	\]
	Therefore, ${x}^{\prime} \in {^\perp}(C\theta_{H})$.
	This implies together with Theorem~\ref{Thm:ThetaH} that
	${x}^{\prime} \in {^\perp}C\theta_{H}$.
	Hence ${x} = {x}^\prime M_{H} \in (^{\perp}C\theta_{H})M_{H}$.
	This implies that 
	\begin{equation}\label{Equ:MGsubset2}
		^{\perp_{H}}(C\theta_{H}) 
		\subseteq 
		(^{\perp}C\theta_{H})M_{H}.
	\end{equation}
	Therefore by (\ref{Equ:MGsubset1}) and (\ref{Equ:MGsubset2}) we have
	\[
		^{\perp_{H}}(C\theta_{H}) 
		=
		(^{\perp}C\theta_{H})M_{H}.
	\]
	This completes the proof.
\end{proof}

\begin{ex}\label{Ex:Z4Code}
	Let $G = \langle (1,2,3)(4) \rangle$
	be a permutation group with order~$3$ acting on 
	$V = \ZZ_{4}^{4}$ 
	having the orbits $G(\alpha_{1}) = \{1,2,3\}$, $G(\alpha_{2}) = \{4\}$.
	Then
	\[
		\overline{G(\alpha_{1})}
		=
		(1,1,1,0),
		\quad
		\overline{G(\alpha_{2})}
		= 
		(0,0,0,1).
	\] 
	Let $C$ be the $G$-code of length~$4$ over $\ZZ_{4}$
	with the following $16$ codewords:
	\begin{center}
		\begin{tabular}[h!]{cccc}
			0000 & 1113 & 1311 & 2020\\
			0022 & 3331 & 3133 & 0202\\
			2200 & 3111 & 1131 & 2002\\
			0220 & 1333 & 3313 & 2222
		\end{tabular}
	\end{center}
	Since $C$ is self-dual, therefore  $C\theta = {^\perp}C\theta$.
	Then by direct calculation, we have
	\begin{align*}
		C\theta 
		& = 
		\{0000,1113,3331,2222\},\\ 
		^{\perp_{G}}(C\theta) 
		& = 
		\{0000, 1111,2222,3333\}.
	\end{align*}
	It follows that
	$^{\perp_{G}}(C\theta) = ({^\perp}C\theta)M$, 
	where the orbit length matrix $M = \diag(3,3,3,1)$.
	Then the  $G$-equivariant weight enumerator is
	\[
		W_{^{\perp_{G}}(C\theta)}(x,y)
		=
		x^{2} + 3y^{2}
		=
		W_{^{\perp}C\theta}(x,y).
	\]
\end{ex}

\begin{proof}[Proof of Theorem~\ref{Thm:EquivMacWilliams}]
	Note that the MacWilliams identity for the ordinary weight enumerators of codes
	also holds for the codes $C\theta_{H}$  and its $H$-dual in $V\theta_{H}$ as follows:
	\begin{equation*}\label{Equ:MacWilliams1}
		W_{^{\perp_{H}}(C\theta_{H})}^{H}(x,y)
		=
		\dfrac{1}{|C\theta_{H}|}
		W_{C\theta_{H}}^{H}
		(x+(|R|-1)y,x-y).
	\end{equation*}
	Since $W_{(^\perp C\theta_{H})M_{H}}^{H} (x,y)= W_{^\perp C\theta_{H}}^{H}(x,y)$,
	hence by Theorem~\ref{Thm:CodeThetaMatrix} the proof is completed.
\end{proof}


\begin{df}
	Let $R$ be a finite Frobenius ring.
	Assume that $G$ is a finite permutation group on 
	the coordinates of $V (= R^{n})$.
	Let~$H$ be a subgroup of~$G$
	satisfying $|H|$ has an inverse in~$R$. 
	Let $D$ be a left (or right) $R$-submodule in~$V\theta_{H}$. 
	Then the \emph{$H$-complete weight enumerator} of $D$ 
	is defined as:
	\[
		\cwe_{H}
		(D:\{x_{a}\}_{a\in R})
		:=
		\sum_{u\in D}
		\prod_{a \in R}
		x_{a}^{n_{a}^{H}(u)}.
	\]
	Here $n_{a}^{H}(u)$ is the number $i$ 
	such that $a = u_{i}$, where 
	$u_{i}$'s are in form~(\ref{Equ:torbit}).
\end{df}

It is immediate that the $H$-complete weight enumerators 
become the usual \emph{complete weight enumerator}
whenever $H$ is trivial.
If $C$ is a left (or right) $G$-code of length~$n$ over~$R$,
we call $\cwe_{H}(C\theta_{H})$ 
as the \emph{$H$-equivariant complete weight enumerator} of~$C$. 
Moreover, if $|G|$ has an inverse in~$R$, 
then we call $\cwe_{G}(C\theta)$
the \emph{$G$-equivariant complete weight enumerator} of~$C$.

\begin{rem}
	We have the following straightforward relations:
	\begin{itemize}
		\item [(1)]
		$\cwe_{H}(D:x_{0} \leftarrow x, x_{a} \leftarrow y \text{ for } a \in R \text{ and } a \neq 0) = W_{D}^{H}(x,y)$.
		
		\item [(2)]
		$\cwe_{H}(C\theta_{H}:x_{0} \leftarrow x, x_{a} \leftarrow y \text{ for } a \in R \text{ and } a \neq 0) = W_{C\theta_{H}}^{H}(x,y)$.
		
		\item [(3)]
		$\cwe_{G}(C\theta:x_{0} \leftarrow x, x_{a} \leftarrow y \text{ for } a \in R \text{ and } a \neq 0) = W_{C\theta}^{G}(x,y)$.
	\end{itemize}
	
\end{rem}



Though 
$W_{(^\perp C\theta_{H})M_{H}}^{H} (x,y)= W_{^\perp C\theta_{H}}^{H}(x,y)$
for $H$-equivariant weight enumerators of $G$-codes over finite Frobenius rings,
but we can observe from Example~\ref{Ex:Z4Code} that 
$(^\perp C\theta_{H})M_{H} \neq {^\perp}C\theta_{H}$. 
This observation concludes a generalization of the MacWilliams identity 
for the equivariant complete weight enumerator for $G$-codes over 
finite Frobenius rings as follows.

\begin{thm}\label{Thm:CompWeightMacWilliams}
	Let $R$ be a finite Frobenius ring. 
	Assume that $G$ is a finite permutation group 
	acts on the coordinates of $R^{n}$.
	Let~$H$ be a subgroup of~$G$
	such that $|H|$ has an inverse in~$R$.
	If $C$ is a $G$-code of length~$n$ over~$R$,
	then we have the relation:
	\[
		\cwe_{H}
		(({^{\perp}C\theta_{H}})M_{H}: \{x_{a}\}_{a \in R})\\
		=
		\dfrac{1}{|C\theta_{H}|} 
		\cwe_{H}
		\left( 
		{C\theta_{H}}:
		\left\{
		\sum_{b \in R}
		\chi\left(a b\right) 
		x_{b}
		\right\}_{a \in R}
		\right),
	\]
	where $M_{H}$ is the $H$-orbit length matrix and $\chi$ is a generating character associated with~$R$.
\end{thm}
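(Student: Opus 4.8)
The plan is to reduce the statement to the ordinary complete weight enumerator MacWilliams identity for linear codes over the finite Frobenius ring $R$, carried out inside the free module $V\theta_H$, so that all of the $G$-theoretic content is absorbed into Theorem~\ref{Thm:CodeThetaMatrix}. First I would use that theorem to rewrite the left-hand side: since $({}^{\perp}C\theta_H)M_H = {}^{\perp_H}(C\theta_H)$, it suffices to prove
\[
	\cwe_H\bigl({}^{\perp_H}(C\theta_H) : \{x_a\}_{a\in R}\bigr)
	=
	\frac{1}{|C\theta_H|}
	\cwe_H\left(C\theta_H : \left\{\sum_{b\in R}\chi(ab)x_b\right\}_{a\in R}\right).
\]
What then remains is a MacWilliams identity for the pair $C\theta_H$ and ${}^{\perp_H}(C\theta_H)$ of mutually $H$-dual submodules of $V\theta_H$.

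Next I would identify $V\theta_H$ with the free $R$-module $R^{t}$ of rank $t$ (the number of $H$-orbits on the coordinates) through the coordinate map $u = \sum_{i=1}^{t} u_i\,\overline{H_V(\alpha_i)} \mapsto (u_1,\ldots,u_t)$; this is an $R$-module isomorphism because the $\overline{H_V(\alpha_i)}$ have pairwise disjoint supports and each orbit length $m_i$ is invertible in $R$ (as $m_i$ divides $|H|$). Under this identification the $H$-inner product $(u,v)_H = \sum_{i=1}^{t} u_i v_i$ becomes the ordinary inner product on $R^{t}$, the $H$-dual ${}^{\perp_H}$ becomes the ordinary dual, and $n_a^{H}(u)$ counts the image coordinates equal to $a$, so $\cwe_H$ becomes the ordinary complete weight enumerator. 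Hence $C\theta_H$ and ${}^{\perp_H}(C\theta_H)$ correspond to an $R$-linear code in $R^{t}$ and its dual, and the displayed identity is precisely the complete weight enumerator MacWilliams identity over a finite Frobenius ring (see~\cite{HL2001, Wood1999}).

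To prove that identity directly I would run the standard character-theoretic argument, which is where the Frobenius hypothesis enters. Fixing a generating character $\chi$, I set $f(u) := \prod_{i=1}^{t} x_{u_i}$, so that $\cwe_H(D:\{x_a\}) = \sum_{u\in D} f(u)$, and compute the Fourier transform
\[
	\widehat{f}(u)
	=
	\sum_{v\in V\theta_H} f(v)\,\chi\bigl((u,v)_H\bigr)
	=
	\prod_{i=1}^{t}\left(\sum_{b\in R}\chi(u_i b)\,x_b\right),
\]
which is exactly $\cwe_H$ of $D$ evaluated at the substituted variables $x_a \leftarrow \sum_{b\in R}\chi(ab)x_b$. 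Poisson summation over the finite abelian group $V\theta_H$ then yields $\sum_{w\in D^{\perp_H}} f(w) = |D|^{-1}\sum_{u\in D}\widehat{f}(u)$ with $D = C\theta_H$, giving the required formula.

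The one point needing care — the main obstacle — is verifying that the annihilator of $C\theta_H$ under the character pairing $(u,w)\mapsto \chi\bigl((u,w)_H\bigr)$ coincides with the $H$-dual ${}^{\perp_H}(C\theta_H)$; this is where the generating-character property from Theorem~\ref{Thm:Character} is essential, since it forces any $s\in R$ with $\chi(rs)=1$ for all $r\in R$ to vanish, so that $\chi\bigl((u,w)_H\bigr)=1$ for all $u$ in the submodule $C\theta_H$ implies $(u,w)_H = 0$ (using that $ru \in C\theta_H$). Note that the orbit-length bookkeeping encoded in $M_H$ never reappears in this step, because it has already been discharged by the reduction through Theorem~\ref{Thm:CodeThetaMatrix}.
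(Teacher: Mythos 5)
Your proposal is correct and follows essentially the same route as the paper: the paper's proof likewise reduces everything to Theorem~\ref{Thm:CodeThetaMatrix} (replacing $({}^{\perp}C\theta_{H})M_{H}$ by ${}^{\perp_{H}}(C\theta_{H})$) and then invokes the ordinary complete weight enumerator MacWilliams identity over a finite Frobenius ring, applied to $C\theta_{H}$ inside $V\theta_{H}\cong R^{t}$. The only difference is that you additionally spell out the character-theoretic (Poisson summation) proof of that ordinary identity, including the annihilator-equals-dual step via the generating character, which the paper simply cites as known.
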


\begin{proof}
	Since the MacWilliams identity for usual complete weight enumerators
	of codes over finite Frobenius rings also hold for $C\theta_{H}$ 
	and its $H$-dual
	in $V\theta_{H}$, where $V = R^{n}$, therefore by 
	Theorem~\ref{Thm:CodeThetaMatrix}, we can have the proof.
\end{proof}


\begin{df}
	Let $R$ be a finite Frobenius ring.
	Assume that $G$ is a finite permutation group on 
	the coordinates of $V (= R^{n})$.
	Let~$H$ be a subgroup of~$G$
	satisfying $|H|$ has an inverse in~$R$. 
	Let $D$ be a left (or right) $R$-submodule in~$V\theta_{H}$.
	Then the \emph{ $g$-th $H$-complete weight enumerator} of $D$ 
	is defined as:
	\[
	\cwe_{H}^{(g)}
	(D:\{x_{a}\}_{a\in R^{g}})
	:=
	\sum_{u_{1},\ldots,u_{g}\in D}
	\prod_{a \in R^{g}}
	x_{a}^{n_{a}^{H}(u_{1},\ldots,u_{g})}.
	\]
	Here $n_{a}^{H}(u_{1},\ldots,u_{g})$ is the number $i$ 
	such that $a = (u_{1,i},\ldots,u_{g,i})$, where 
	$u_{i}$'s are in form~(\ref{Equ:torbit}).
	Obviously, if $H$ is trivial, 
	then the  $g$-th $H$-complete weight enumerator 
	becomes the \emph{$g$-th complete weight enumerator}.
	For detail discussions on $g$-th complete weight enumerator,
	we refer the readers to~\cite{CM2021, CMO2022, MO2019}.
	If $C$ is a left (or right) $G$-code of length~$n$ over~$R$,
	we call $\cwe_{H}^{(g)}(C\theta_{H})$ 
	as the \emph{ $g$-th $H$-equivariant complete weight enumerator} of~$C$. 
	Moreover, if $|G|$ has an inverse in~$R$, 
	then we call $\cwe_{G}^{(g)}(C\theta)$
	the \emph{$g$-th $G$-equivariant complete weight enumerator} of~$C$.
\end{df}

\begin{rem}
	We have the following obvious correspondence:
	\begin{itemize}
		\item [(1)]
		$\cwe_{H}^{(1)}(D) = \cwe_{H}(D)$.
		
		\item [(2)]
		$\cwe_{H}^{(1)}(C\theta_{H}) = \cwe_{H}(C\theta_{H})$.
		
		\item [(3)]
		$\cwe_{G}^{(1)}(C\theta) = \cwe_{G}(C\theta)$.
	\end{itemize}
	
\end{rem}



Now we have the genus~$g$ generalization of the MacWilliams identity 
for the equivariant complete weight enumerator for $G$-codes over 
finite Frobenius rings.

\begin{thm}\label{Thm:gthCWEMacWilliams}
	Let $R$ be a finite Frobenius ring. 
	Assume that $G$ is a finite permutation group 
	acts on the coordinates of $R^{n}$.
	Let~$H$ be a subgroup of~$G$ acting on~$R^{n}$
	such that $|H|$ has an inverse in~$R$.
	If $C$ is a $G$-code of length~$n$ over~$R$,
	then we have the relation:
	\[
		\cwe_{H}^{(g)}
		(({^{\perp}C\theta_{H}})M_{H}: \{x_{a}\}_{a \in R^{g}})\\
		=
		\dfrac{1}{|C\theta_{H}|^{g}} 
		\cwe_{H}^{(g)}
		\left( 
		{C\theta_{H}}:
		\left\{
		\sum_{b \in R^g}
		\chi\left(\sum_{i = 1}^{g}a_i b_i\right) 
		x_{b}
		\right\}_{a \in R^{g}}
		\right),
	\]
	where $M_{H}$ is the $H$-orbit length matrix and $\chi$ is a generating character associated with~$R$.
\end{thm}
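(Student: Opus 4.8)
The plan is to reproduce the two-step reduction used for the genus~$1$ identity in Theorem~\ref{Thm:CompWeightMacWilliams}, with the ordinary genus~$g$ MacWilliams identity supplied by regarding a $g$-tuple of codewords over $R$ as a single codeword over the Frobenius ring $R^{g}$. Thus the whole statement will follow from the ordinary complete weight enumerator MacWilliams identity over a finite Frobenius ring (the input already used in Theorem~\ref{Thm:CompWeightMacWilliams}) together with Theorem~\ref{Thm:CodeThetaMatrix}.

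First I would transport everything onto a free module. Writing each element of $V\theta_{H}$ uniquely in the form~(\ref{Equ:torbit}), the assignment $u=\sum_{i=1}^{t}u_{i}\overline{H_{V}(\alpha_{i})}\mapsto(u_{1},\ldots,u_{t})$ is an $R$-module isomorphism $V\theta_{H}\cong R^{t}$ under which the $H$-inner product becomes the ordinary inner product on $R^{t}$, the $H$-support and $H$-weight become the ordinary support and weight, and hence $\cwe_{H}^{(g)}$ of a submodule of $V\theta_{H}$ equals the ordinary $g$-th complete weight enumerator of its image in $R^{t}$. In particular $C\theta_{H}$ corresponds to an honest $R$-linear code $E\subseteq R^{t}$ with $|E|=|C\theta_{H}|$, and $^{\perp_{H}}(C\theta_{H})$ corresponds to the ordinary dual $E^{\perp}$.

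Next I would obtain the genus~$g$ identity over $R^{t}$ by reorganization. Interleaving the coordinates identifies a $g$-tuple $(u_{1},\ldots,u_{g})\in E^{g}$ with the single vector in $(R^{g})^{t}$ whose $j$-th entry is the column $(u_{1,j},\ldots,u_{g,j})\in R^{g}$; under this bijection $E^{g}$ becomes an $R^{g}$-linear code $\widetilde{E}\subseteq(R^{g})^{t}$, the $g$-th complete weight enumerator of $E$ becomes the ordinary complete weight enumerator of $\widetilde{E}$, and one checks that $(E^{\perp})^{g}$ becomes the ordinary dual $\widetilde{E}^{\perp}$ taken over $R^{g}$. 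Since $R^{g}$ is again a finite Frobenius ring whose generating character is $\chi_{g}(a)=\prod_{i=1}^{g}\chi(a_{i})$, the transform matrix entry $\chi_{g}(a\cdot b)$ (with $a\cdot b$ the componentwise product in $R^{g}$) is exactly $\chi(\sum_{i=1}^{g}a_{i}b_{i})$, and $|\widetilde{E}|=|E|^{g}=|C\theta_{H}|^{g}$. Applying the ordinary complete weight enumerator MacWilliams identity over $R^{g}$ to $\widetilde{E}$ therefore yields
\[
\cwe_{H}^{(g)}\!\left({}^{\perp_{H}}(C\theta_{H}):\{x_{a}\}_{a\in R^{g}}\right)
=
\frac{1}{|C\theta_{H}|^{g}}\,
\cwe_{H}^{(g)}\!\left(C\theta_{H}:\left\{\sum_{b\in R^{g}}\chi\!\left(\sum_{i=1}^{g}a_{i}b_{i}\right)x_{b}\right\}_{a\in R^{g}}\right).
\]

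Finally I would invoke Theorem~\ref{Thm:CodeThetaMatrix} to rewrite $^{\perp_{H}}(C\theta_{H})=({}^{\perp}C\theta_{H})M_{H}$, which turns the left-hand side into the claimed expression $\cwe_{H}^{(g)}(({}^{\perp}C\theta_{H})M_{H}:\{x_{a}\}_{a\in R^{g}})$ and completes the proof. I expect the main obstacle to be the reorganization step: one must verify that $\widetilde{E}$ is genuinely an $R^{g}$-submodule, that its $R^{g}$-dual is precisely $(E^{\perp})^{g}$, and that the generating character of $R^{g}$ produces the per-column transform $\bigl[\chi(\sum_{i}a_{i}b_{i})\bigr]_{a,b\in R^{g}}$; once these are established, everything else is the formal transport of structure along $V\theta_{H}\cong R^{t}$ combined with Theorem~\ref{Thm:CodeThetaMatrix}.
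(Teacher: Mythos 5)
Your proposal is correct and takes essentially the same route as the paper: the paper's proof likewise applies the known MacWilliams identity for ordinary genus-$g$ complete weight enumerators to $C\theta_{H}$ and its $H$-dual inside $V\theta_{H}$, and then invokes Theorem~\ref{Thm:CodeThetaMatrix} to replace $^{\perp_{H}}(C\theta_{H})$ by $({^{\perp}C\theta_{H}})M_{H}$. The only difference is that you also derive that genus-$g$ identity from the genus-$1$ case by passing to the product Frobenius ring $R^{g}$ with generating character $a \mapsto \chi(a_{1})\cdots\chi(a_{g})$, a step the paper simply treats as known; your verification of that reduction (the $R^{g}$-module structure, the dual, and the transform entries $\chi(\sum_{i}a_{i}b_{i})$) is sound.
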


\begin{proof}
	Since the MacWilliams identity for usual genus~$g$ complete weight enumerators
	of codes over finite Frobenius rings also hold for $C\theta_{H}$ 
	and its $H$-dual
	in $V\theta_{H}$, where $V = R^{n}$, therefore by 
	Theorem~\ref{Thm:CodeThetaMatrix}, we can have the proof.
\end{proof}

\subsection{$G$-code analogue of Bachoc's MacWilliams identity}

Bachoc~\cite{Bachoc} presented the MacWilliams-type identity for
the weight enumerators of binary codes associated with
a discrete harmonic function. This weight enumerator is
known as harmonic weight enumerator. 
Later, Bachoc~\cite{BachocNonBinary}
and Tanabe~\cite{Tanabe2001} independently proved 
the non-binary version of the MacWilliams identity for
the harmonic weight enumerators of codes over finite fields.
Moreover, Britz et al.~\cite{BrChIsMiTa2023} generalized 
this identity for codes over finite Frobenius rings.
Our aim is to present a~$G$-code version of Bachoc's MacWilliams identity 
for codes over finite Frobenius rings.

Let $[t] := \{1,2,\ldots,t\}$, where $t$ is a positive integer.
We denote the set of all $d$-subset of $[t]$ by $[t]_{d}$ 
for $d = 0,1, \ldots, t$. 
Let $R$ be a finite Frobenius ring.  
Assume that $G$ is a finite permutation group on 
the coordinates of $V (=R^{n})$. 
Let	$H$ be a subgroup of~$G$ acting on~$V$ 
with~$t$ orbits such that $|H|$ has an inverse in~$R$.
We denote by 
$\RR (V\theta_{H})$ and $\RR [t]_{d}$
the real vector spaces spanned by the elements of  
$V\theta_{H}$ and $[t]_{d}$,
respectively. 
An element of 
$\RR [t]_{d}$
is denoted by
\begin{equation*}\label{Equ:FunREd}
	f :=
	\sum_{z \in [t]_{d}}
	f(z) z
\end{equation*}
with coefficients $f(z) \in \RR$. 
Thus $\RR [t]_{d}$ is identified with the 
real-valued function on $[t]_{d}$ given by 
$z \mapsto f(z)$. 
Let $(V\theta_{H})_{d} := \{u \in V\theta_{H} \mid \wt_{H}(u) = d\}$. 
An element 
$f \in \RR [t]_{d}$
can be extended to an element 
$\widetilde{f}\in \RR (V\theta_{H})$
by setting, for all 
$u \in V\theta_{H}$,
\begin{equation*}\label{Equ:AcuteF}
	\widetilde{f}(u)
	:=
	\sum_{\substack{v\in (V\theta_{H})_{d},\\ \supp_{H}(v)\subseteq \supp_{H}(u)}}
	f(\supp_{H}(v)).
\end{equation*}
Note that 
$\widetilde{f}(u) = 0$
for any $u \in V\theta_{H}$ such that $\wt_{H}(u) < d$. 
The differentiation $\gamma$ is the operator on $\RR [t]_{d}$ defined by 
linearity from the identity 
\begin{equation*}\label{Equ:Gamma}
	\gamma(z) := 
	\sum_{y\in {[t]}_{d-1}, y\subseteq z} 
	y
\end{equation*}
for all 
$z \in [t]_{d}$
and for all $d=0,1, \ldots t$. 
Also, $\Harm_{d}(t)$ is the kernel of~$\gamma$:

\[
\Harm_{d}(t)
:= 
\ker
\left(
\gamma\big|_{\RR [t]_{d}}
\right).
\]

\begin{df}
	Let $R$ be a finite Frobenius ring.
	Assume that $G$ is a finite permutation group on 
	the coordinates of $V (= R^{n})$.
	Let~$H$ be a subgroup of~$G$ with $t$ orbits
	such that $|H|$ has an inverse in~$R$. 
	Let $D$ be a left (or right) $R$-submodule in~$V\theta_{H}$.
	Let $f \in \Harm_{d}(t)$.
	Then the \emph{harmonic $H$-weight enumerator} of $D$ 
	associated to~$f$ is defined as:
	\[
		W_{D,f}^{H}(x,y)
		:=
		\sum_{u\in D}
		\widetilde{f}(u)
		x^{t-\wt_{H}(u)}
		y^{\wt_{H}(u)}.
	\]
\end{df}
Clearly, if $H$ is trivial, then the harmonic $H$-weight enumerators 
become the harmonic weight enumerators.
We refer the readers to~\cite{Bachoc, BrChIsMiTa2023, CMO2023}
for the details of harmonic weight enumerators.
If $D = C\theta_{H}$ for any left (or right) $G$-code $C$ of 
length~$n$ over~$R$,
we call 
$W_{C\theta_{H},f}^{H}$ the harmonic $H$-equivariant weight enumerator of~$C$. 
Moreover, if $|G|$ has an inverse in~$R$, then we call $W_{C\theta,f}^{G}$
the harmonic $G$-equivariant weight enumerator of~$C$.


 
Now we give the MacWilliams identity for harmonic 
weight enumerators of $G$-codes over finite Frobenius rings.

\begin{thm}\label{Thm:HarmMacWilliams} 
	Let $R$ be a finite Frobenius ring. 
	Assume that $G$ is a finite permutation group on 
	the coordinates of $R^{n}$.
	Let $H$ be a subgroup of~$G$
	with~$t$ orbits	such that $|H|$ 
	has an inverse in~$R$.
	Let $f \in \Harm_{d}(t)$.
	Let $C$ be a left $R$-linear code of length~$n$.
	Then we have
	\[
		W_{C\theta_{H},f}^{H}(x,y) 
		:= 
		(xy)^{d} Z_{C\theta_{H},f}^{H}(x,y),
	\]
	where $Z_{C\theta_{H},f}^{H}$ is a homogeneous polynomial of degree $t-2d$ 
	and satisfies
	\[
		Z_{(^{\perp}C\theta_{H})M_{H},f}^{H}
		(x,y)
		= 
		(-1)^{d} 
		\frac{|R|^{d}}{|C\theta_{H}|} 
		Z_{C\theta_{H},f}^{H} 
		\left( 
		x+(|R|-1)y, 
		x-y 
		\right),
	\]
	where $M_{H}$ is the $H$-orbit length matrix.
\end{thm}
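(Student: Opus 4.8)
The plan is to reduce the $H$-equivariant statement to the ordinary harmonic MacWilliams identity for codes over a finite Frobenius ring, proved by Britz et al.~\cite{BrChIsMiTa2023} (generalizing~\cite{Bachoc, BachocNonBinary, Tanabe2001}), applied to a code of length $t$ sitting inside $V\theta_{H}$. The first step is to make precise the identification of $(V\theta_{H}, (\cdot,\cdot)_{H})$ with the standard coordinate module $R^{t}$. Since $V\theta_{H}$ consists exactly of the vectors that are constant on each $H$-orbit of coordinates, the indicator vectors $\overline{H_{V}(\alpha_{1})},\ldots,\overline{H_{V}(\alpha_{t})}$ form a free $R$-basis, and the map $u=\sum_{i=1}^{t}u_{i}\overline{H_{V}(\alpha_{i})}\mapsto(u_{1},\ldots,u_{t})$ is an $R$-module isomorphism $V\theta_{H}\xrightarrow{\sim}R^{t}$. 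Under this map the $H$-inner product $(u,v)_{H}=\sum_{i}u_{i}v_{i}$ becomes the ordinary dot product on $R^{t}$, $\supp_{H}(u)$ becomes the ordinary support in $[t]$, and $\wt_{H}(u)$ becomes the ordinary weight; consequently the $H$-dual of a submodule of $V\theta_{H}$ is its ordinary dual in $R^{t}$.

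Next I would verify that the entire harmonic apparatus is transported unchanged by this identification. The sets $[t]_{d}$, the differentiation $\gamma$, and the space $\Harm_{d}(t)$ depend only on the index set $[t]$, hence are literally the same objects in both pictures. The extension formula
\[
\widetilde{f}(u)=\sum_{\substack{v\in (V\theta_{H})_{d}\\ \supp_{H}(v)\subseteq \supp_{H}(u)}}f(\supp_{H}(v))
\]
is carried to the usual extension for the length-$t$ code, since after grouping by $z=\supp_{H}(v)$ both equal a fixed multiple of $\sum_{z\subseteq\supp(u),\,|z|=d}f(z)$. Thus $W^{H}_{D,f}$ is, verbatim, the harmonic weight enumerator of the length-$t$ code corresponding to $D$ under the isomorphism; this is consistent with the excerpt's remark that the $H=\{e\}$ case recovers the ordinary harmonic weight enumerator.

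With the identification in place, $C\theta_{H}$ is an $R$-submodule of $R^{t}$, i.e.\ a linear code of length $t$, and $^{\perp_{H}}(C\theta_{H})$ is its ordinary dual. The cited harmonic MacWilliams identity for codes of length $t$ over $R$, applied with $f\in\Harm_{d}(t)$, then yields simultaneously the factorization $W_{C\theta_{H},f}^{H}(x,y)=(xy)^{d}Z_{C\theta_{H},f}^{H}(x,y)$ with $Z_{C\theta_{H},f}^{H}$ homogeneous of degree $t-2d$, and the transformation
\[
Z_{^{\perp_{H}}(C\theta_{H}),f}^{H}(x,y)=(-1)^{d}\,\frac{|R|^{d}}{|C\theta_{H}|}\,Z_{C\theta_{H},f}^{H}\bigl(x+(|R|-1)y,\;x-y\bigr).
\]
Finally I would invoke Theorem~\ref{Thm:CodeThetaMatrix} to substitute $^{\perp_{H}}(C\theta_{H})=(^{\perp}C\theta_{H})M_{H}$, which rewrites the left-hand side as $Z_{(^{\perp}C\theta_{H})M_{H},f}^{H}$ and gives exactly the asserted identity.

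The point requiring the most care is confirming that nothing about the orbit structure of $H$ leaks into the harmonic combinatorics under the isometry $V\theta_{H}\cong R^{t}$, so that the delicate divisibility of $W^{H}_{C\theta_{H},f}$ by $x^{d}$ (the nontrivial half of the $(xy)^{d}$ factorization) is precisely the one supplied by the cited theorem; the orbit-length matrix $M_{H}$ should then enter only at the final substitution via Theorem~\ref{Thm:CodeThetaMatrix}. Should a self-contained argument be preferred to the reduction, one would instead replay Bachoc's proof with $\wt_{H}$, $\supp_{H}$, and $(\cdot,\cdot)_{H}$ in place of their ordinary counterparts, the $x^{d}$-divisibility remaining the subtle ingredient while the MacWilliams step is powered by the generating-character summation identity for $R$ recorded in Section~\ref{SubSec:FrobeniusRings}.
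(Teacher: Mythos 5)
Your proposal is correct and follows essentially the same route as the paper: the paper's (very terse) proof likewise observes that the harmonic MacWilliams identity for codes over finite Frobenius rings applies verbatim to $C\theta_{H}$ and its $H$-dual inside $V\theta_{H}\cong R^{t}$, and then invokes Theorem~\ref{Thm:CodeThetaMatrix} to replace $^{\perp_{H}}(C\theta_{H})$ by $(^{\perp}C\theta_{H})M_{H}$. Your write-up simply supplies the details the paper leaves implicit, in particular the verification that the isomorphism $V\theta_{H}\xrightarrow{\sim}R^{t}$ transports $\supp_{H}$, $\wt_{H}$, $(\cdot,\cdot)_{H}$, and the harmonic extension $\widetilde{f}$ to their ordinary length-$t$ counterparts.
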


\begin{proof}
	Harmonic generalization of the MacWilliams identity for code over 
	finite Frobenius rings also hold for $C\theta_{H}$ and its $H$-dual in
	$V\theta_{H}$, where $V = R^{n}$. So, by Theorem~\ref{Thm:CodeThetaMatrix},
	we can complete the proof.
\end{proof}

\section{Lattices with group actions}\label{Sec:Glattices}

Let $\Gamma$ be the real $n$-dimensional vector space~$\RR^{n}$. 
Let $G$ be a finite permutation group 
acting on the coordinates of~$\Gamma$. 
Then $\Gamma$ is an $\RR G$-\emph{module} 
if there exists a mapping  
\[
	G \times \Gamma \to \Gamma;
	(g,v) \mapsto vg
\]
satisfying 
$vg \in \Gamma$ 
for all~$g \in G$ and $v \in \Gamma$.
Let~$v = (v_{1},\ldots,v_{n}) \in \Gamma$ and $g \in G$.
We define the action of $G$ on~$\Gamma$ as follows:
\[
	vg
	:=
	(x_{1},\ldots,x_{n})
	\mbox{ such that }
	x_{i} = v_{ig^{-1}}
	\mbox{ for }
	i = 1,\ldots,n.
\]
In this way, 
$\Gamma$ becomes an $\RR G$-module. 
A $G$-\emph{lattice} is a lattice which is also $\ZZ G$-submodule of $\Gamma$.
For any subgroup~$H$ of~$G$,
we define an operator:
\[
	\theta_{H}
	:=
	\frac{1}{|H|}
	\sum_{h\in H}
	h.
\]
Then $\Gamma\theta_{H} := \{v\theta_{H} \mid v \in \Gamma\}$.
It is immediate that 
$\Gamma\theta_{H} = \{v \in \Gamma \mid vg = v \mbox{ for all } g \in G\}$.
Clearly, $\Gamma\theta_{H}$ is an $\RR$-submodule of~$\Gamma$. 
Let
$H_{\Gamma}(\alpha_{1}),\ldots,H_{\Gamma}(\alpha_{t})$ 
be the orbits of the coordinates of $\Gamma$ under the action of $H$.
Let $m_{i}$ be the length of $H_{\Gamma}(\alpha_{i})$.
Define $\overline{H_{\Gamma}(\alpha_{i})}$ as the vector of~$\Gamma$
which has~$1/\sqrt{m_{i}}$ as its entry for every point of~$H_{\Gamma}(\alpha_{i})$
and~$0$ elsewhere.

Therefore, each element~$u \in \Gamma\theta_{H}$ 
can be written in the following form:
\begin{equation}\label{Equ:Lattorbit}
	u	
	= 
	\sum_{i = 1}^{t}	
	u_{i} \overline{H_{\Gamma}(\alpha_{i})}.
\end{equation}
This implies that $\Gamma\theta_{H}$ is a~$t$-dimensional vector space. 
The $H$-\emph{inner product} of any two elements 
$u = \sum_{i =  1}^{t} u_{i}\overline{H_{\Gamma}(\alpha_{i})}$,
$v = \sum_{i =  1}^{t} v_{i}\overline{H_{\Gamma}(\alpha_{i})}$
of $\Gamma\theta_{H}$,
can be defined as:
\[
\la u,v\ra_{H}
=
\sum_{i=1}^{t}
u_{i}v_{i}.
\]
Let~$D$ be a lattice in $\Gamma\theta_{H}$.
That is, there exists an~$\RR$-basis consisting of~$t$ 
elements of~$\Gamma\theta_{H}$ which is a~$\ZZ$-basis of~$D$.
The \emph{dual lattice} of~$D$ with respect to the $H$-inner product, 
is denoted by $D^{\ast_{H}}$, 
and defined as follows:
\begin{align*}
	D^{\ast_{H}}
	& :=
	\{
	v \in \Gamma\theta_{H}
	\mid
	\la u,v\ra_{H} \in \ZZ
	\text{ for all }
	u \in D
	\}.
\end{align*}
If $H$ consists only the identity element, 
then the above mentioned dual coincide with the ordinary 
dual of a lattice in $\Gamma$.

Now we have the following useful lemma:

\begin{lem}\label{Lem:LambdaZeroTheta}
	Let $\Lambda$ be a $G$-lattice, 
	where $G$ is a finite permutation group on 
	the coordinates of $\Gamma$.
	For any subgroup~$H$ of~$G$,
	we define:
	$\Lambda_{0}:=\{v \in \Lambda \mid v\theta_{H} \in \Lambda\}$.
	Then $\Lambda_{0}\theta_{H}$ is a lattice.
\end{lem}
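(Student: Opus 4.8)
The plan is to identify the set $\Lambda_{0}\theta_{H}$ with the intersection $\Lambda \cap \Gamma\theta_{H}$ and then invoke the standard characterization of a lattice as a discrete subgroup of full rank. First I would record the idempotency $\theta_{H}^{2} = \theta_{H}$: since $H$ is a group, for each fixed $h \in H$ the assignment $h' \mapsto hh'$ permutes $H$, so $\sum_{h,h' \in H} hh' = |H|\sum_{g \in H} g$, whence $\theta_{H}^{2} = \theta_{H}$. Consequently $\theta_{H}$ acts as the identity on its image $\Gamma\theta_{H}$; that is, $w\theta_{H} = w$ for every $w \in \Gamma\theta_{H}$.

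Next I would prove the key identity $\Lambda_{0}\theta_{H} = \Lambda \cap \Gamma\theta_{H}$ by double inclusion. If $v \in \Lambda_{0}$, then $v\theta_{H} \in \Lambda$ by the definition of $\Lambda_{0}$, while $v\theta_{H} \in \Gamma\theta_{H}$ by construction, so $\Lambda_{0}\theta_{H} \subseteq \Lambda \cap \Gamma\theta_{H}$. Conversely, if $w \in \Lambda \cap \Gamma\theta_{H}$, then $w\theta_{H} = w \in \Lambda$, so $w$ meets the defining condition of $\Lambda_{0}$; hence $w \in \Lambda_{0}$ and $w = w\theta_{H} \in \Lambda_{0}\theta_{H}$, giving the reverse inclusion. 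Thus $\Lambda_{0}\theta_{H}$ is the intersection of the lattice $\Lambda$ with the linear subspace $\Gamma\theta_{H}$, so it is an additive subgroup of $\Gamma\theta_{H}$ that inherits discreteness from $\Lambda$: a neighborhood of $0$ in $\Gamma$ meeting $\Lambda$ only in $0$ meets $\Lambda \cap \Gamma\theta_{H}$ only in $0$, and discreteness at the origin propagates to the whole subgroup.

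It remains to verify that $\Lambda_{0}\theta_{H}$ has full rank $t$ in the $t$-dimensional space $\Gamma\theta_{H}$, and this is the step I expect to be the real obstacle: the intersection of a lattice with an arbitrary subspace need not be full rank, as the subspace may be irrational relative to the lattice. Here the $G$-lattice (hence $H$-invariant) structure of $\Lambda$ removes the difficulty. For any $v \in \Lambda$ the averaged vector satisfies $|H|\,v\theta_{H} = \sum_{h \in H} vh \in \Lambda$, since each $vh \in \Lambda$ by $H$-invariance; as also $|H|\,v\theta_{H} \in \Gamma\theta_{H}$, we obtain $|H|\,v\theta_{H} \in \Lambda \cap \Gamma\theta_{H} = \Lambda_{0}\theta_{H}$. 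Because $\Lambda$ spans $\Gamma$ over $\RR$ and $\theta_{H}$ projects $\Gamma$ onto $\Gamma\theta_{H}$, the image $\Lambda\theta_{H}$ spans $\Gamma\theta_{H}$; scaling by $|H|$ leaves the $\RR$-span unchanged, so $\Lambda_{0}\theta_{H}$ spans $\Gamma\theta_{H}$ as well. A discrete subgroup spanning a $t$-dimensional real vector space is a lattice of rank $t$, which completes the argument. I would close by observing that this same description exhibits $\Lambda_{0}$ itself as a sublattice of $\Lambda$, since $\theta_{H}$ is $\RR$-linear and $\Lambda$ is closed under addition.
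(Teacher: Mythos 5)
Your proof is correct, and in fact the paper offers nothing to compare it against: Lemma~3.1 is stated there without any proof (it is used later in Theorem~\ref{Thm:GcodeGlattice2}, whose proof just cites the lemma), so your argument fills a genuine gap rather than duplicating one. The route you take is the natural one, and every step checks out: the idempotency $\theta_{H}^{2}=\theta_{H}$ gives that $\theta_{H}$ fixes $\Gamma\theta_{H}$ pointwise, from which your key identity $\Lambda_{0}\theta_{H}=\Lambda\cap\Gamma\theta_{H}$ follows by the clean double inclusion you wrote; discreteness is inherited from $\Lambda$; and you correctly identified, and resolved, the only nontrivial point --- full rank in the $t$-dimensional space $\Gamma\theta_{H}$, which is exactly what the paper's definition of a lattice in $\Gamma\theta_{H}$ demands. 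The $H$-invariance of $\Lambda$ (available since $\Lambda$ is a $G$-lattice, i.e.\ a $\ZZ G$-submodule) yields $|H|\,v\theta_{H}=\sum_{h\in H}vh\in\Lambda\cap\Gamma\theta_{H}$ for all $v\in\Lambda$, and since $\Lambda$ spans $\Gamma$ and $\theta_{H}$ surjects onto $\Gamma\theta_{H}$, the scaled image spans; a discrete spanning subgroup of a $t$-dimensional real space is a rank-$t$ lattice. One small quibble: your closing observation that $\Lambda_{0}$ is a sublattice of $\Lambda$ is justified only to the extent of the subgroup property ($(v-w)\theta_{H}=v\theta_{H}-w\theta_{H}\in\Lambda$); if you want $\Lambda_{0}$ to be full rank in $\Gamma$ as well, note that your averaging computation already shows $|H|\Lambda\subseteq\Lambda_{0}$, which settles it. That remark is not needed for the lemma, but tightening it costs one line.
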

Now we have the lattice analogue of 
Theorem~\ref{Thm:ThetaH}
as follows.

\begin{thm}\label{Thm:LatticeHayden}
	Let $\Lambda$ be a $G$-lattice, 
	where $G$ is a finite permutation group on 
	the coordinates of $\Gamma$.
	For any subgroup~$H$ of~$G$
	 we have
	\[
		(\Lambda_{0}\theta_{H})^{\ast}
		=
		\ker\theta_{H} 
		\oplus 
		\Lambda_{0}^\ast
		\theta_{H}.
	\]
\end{thm}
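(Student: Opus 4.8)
The plan is to prove Theorem~\ref{Thm:LatticeHayden} by mimicking the structure of the code-theoretic Hayden theorem (Theorem~\ref{Thm:ThetaH}), translating the dual-code argument into the lattice setting where the relevant pairing is the Euclidean inner product~$\la\cdot,\cdot\ra$ on~$\Gamma=\RR^n$ and the duality is that of lattices. First I would observe that, since $H$ acts by coordinate permutations, each $h\in H$ is an orthogonal transformation, so $\la uh,v\ra=\la u,vh^{-1}\ra$; averaging over $H$ gives that $\theta_H$ is self-adjoint, i.e.\ $\la u\theta_H,v\ra=\la u,v\theta_H\ra$, and idempotent, $\theta_H^2=\theta_H$. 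These two facts, plus the orthogonal decomposition $\Gamma=\Gamma\theta_H\oplus\ker\theta_H$ (with the two summands orthogonal precisely because $\theta_H$ is a self-adjoint projection), are the algebraic backbone of the whole argument, exactly as $\theta^T=\theta$ was in the commented-out code proof.

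Next I would establish the inclusion $\ker\theta_H\oplus\Lambda_0^\ast\theta_H\subseteq(\Lambda_0\theta_H)^\ast$. For the $\ker\theta_H$ part: if $y\in\ker\theta_H$ and $x=v\theta_H\in\Lambda_0\theta_H$, then $\la y,v\theta_H\ra=\la y\theta_H,v\ra=\la 0,v\ra=0\in\ZZ$, using self-adjointness and $y\theta_H=0$. For the $\Lambda_0^\ast\theta_H$ part: if $w\in\Lambda_0^\ast$ and $v\in\Lambda_0$, then using $\theta_H^2=\theta_H$ and self-adjointness, $\la w\theta_H,v\theta_H\ra=\la w\theta_H^2,v\ra=\la w\theta_H,v\ra=\la w,v\theta_H\ra$; since $v\in\Lambda_0$ means $v\theta_H\in\Lambda$, and because $\Lambda$ is a $G$-lattice one checks $v\theta_H\in\Lambda_0$ as well, so $\la w,v\theta_H\ra\in\ZZ$ as $w\in\Lambda_0^\ast$. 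This shows $w\theta_H$ pairs integrally with all of $\Lambda_0\theta_H$, giving the desired inclusion.

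For the reverse inclusion $(\Lambda_0\theta_H)^\ast\subseteq\ker\theta_H\oplus\Lambda_0^\ast\theta_H$, I would take $y\in(\Lambda_0\theta_H)^\ast\subseteq\Gamma\theta_H$ and split it as $y=(y-y\theta_H)+y\theta_H$. Since $y\in\Gamma\theta_H$ we actually have $y\theta_H=y$, so one must argue instead through the projection applied to an arbitrary preimage, or directly verify $y\theta_H\in\Lambda_0^\ast\theta_H$ by checking $\la y,v\ra\in\ZZ$ for all $v\in\Lambda_0$: indeed $\la y,v\ra=\la y\theta_H,v\ra=\la y,v\theta_H\ra$, and $v\theta_H\in\Lambda_0\theta_H$ forces this to lie in $\ZZ$ by definition of $y$. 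Hence $y=y\theta_H\in\Lambda_0^\ast\theta_H$, and together with the $\ker\theta_H$ summand (which contributes trivially inside $\Gamma\theta_H$) the two inclusions combine to the claimed equality. Throughout I would invoke Lemma~\ref{Lem:LambdaZeroTheta} to guarantee $\Lambda_0\theta_H$ is genuinely a lattice so that $(\Lambda_0\theta_H)^\ast$ is well defined.

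The main obstacle I anticipate is the bookkeeping around $\Lambda_0$ and the directness of the sum. The subtlety is that the dual $(\Lambda_0\theta_H)^\ast$ is taken inside the ambient $\Gamma$ (not inside $\Gamma\theta_H$), so the $\ker\theta_H$ summand reappears genuinely; I must be careful about whether the dual is computed in $\Gamma$ or in $\Gamma\theta_H$, since the theorem statement places $\ker\theta_H$ as a nontrivial direct summand. The delicate verification is that $v\in\Lambda_0\implies v\theta_H\in\Lambda_0$ (needed to close the first inclusion), which uses that $\Lambda$ is $\ZZ G$-stable so that $v\theta_H\in\Lambda$ together with $(v\theta_H)\theta_H=v\theta_H\in\Lambda$; and that the sum $\ker\theta_H\oplus\Lambda_0^\ast\theta_H$ is direct, which follows because $\Lambda_0^\ast\theta_H\subseteq\Gamma\theta_H$ meets $\ker\theta_H$ only in~$\0$. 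Confirming these two points rigorously, rather than the pairing computations themselves, is where the real care is required.
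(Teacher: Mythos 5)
Your proposal is correct and takes essentially the same route as the paper, which omits the proof precisely because it is the Hayden-style argument you reconstruct: self-adjointness and idempotence of $\theta_H$, the two pairing inclusions, and the splitting $y=(y-y\theta_H)+y\theta_H$ of an element of the ambient dual. The one blemish is your passing claim $(\Lambda_0\theta_H)^{\ast}\subseteq\Gamma\theta_H$, which is false since the dual is computed in the ambient $\Gamma$ and genuinely contains $\ker\theta_H$ (for a general $y$ the first equality in $\la y,v\ra=\la y\theta_H,v\ra$ also fails, though $\la y\theta_H,v\ra=\la y,v\theta_H\ra\in\ZZ$ is all you need), but you catch and repair exactly this point in your final paragraph, so the argument stands.
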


\begin{proof}
	The proof is similar to the proof of~\cite[Theorem 4.2]{BrHalHay1981},
	so we omit it.
\end{proof}

\begin{df}
	Let $D$ be a lattice in~$\Gamma\theta_{H}$, 
	where~$H$ is a subgroup of a permutation group~$G$ acting
	on the coordinates of~$\Gamma$. 
	Then the \emph{H-theta series} of~$D$ in~$\Gamma\theta_{H}$ 
	is defined as follows:
	\[
		\Theta_{D}^{H}(z)
		:=
		\sum_{x \in D}
		q^{\la x, x \ra_{H}},
	\]
	where $q := e^{\pi i z}$ and $z \in \HH$,
	and	$x$ is in form~(\ref{Equ:Lattorbit}).
	Clearly, when $H$ is trivial, 
	the $H$-theta series 
	becomes the usual theta series.
\end{df}

Now we have the following $G$-lattice version of Jacobi's formula for theta series.

\begin{thm}\label{Thm:GLatticeJacobiFormula}
	Let $\Lambda$ be a $G$-lattice, 
	where $G$ is a finite permutation group on 
	the coordinates of $\Gamma$.
	Then for any subgroup~$H$ of~$G$
	the following relation:
	\[
		\vartheta_{(\Lambda_{0}\theta_{H})^{\ast_{H}}}(z)
		=
		(\det \Lambda_{0}\theta_{H})^{1/2}
		(i/z)^{t/2}
		\vartheta_{\Lambda_{0}\theta_{H}}(- 1/z).
	\]
\end{thm}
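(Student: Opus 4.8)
The plan is to reduce the statement to the classical Jacobi transformation formula (Theorem~\ref{Thm:JacobiFormula}) by exhibiting $\Gamma\theta_{H}$, equipped with the $H$-inner product, as a $t$-dimensional Euclidean space isometric to the standard $\RR^{t}$. The crucial observation is that the vectors $\overline{H_{\Gamma}(\alpha_{i})}$ are orthonormal with respect to the \emph{ordinary} inner product $\la\cdot,\cdot\ra$ on $\Gamma = \RR^{n}$: since $\overline{H_{\Gamma}(\alpha_{i})}$ carries the entry $1/\sqrt{m_{i}}$ at each of the $m_{i}$ coordinates of the orbit $H_{\Gamma}(\alpha_{i})$ and zero elsewhere, and distinct orbits are supported on disjoint coordinate sets, one computes $\la \overline{H_{\Gamma}(\alpha_{i})}, \overline{H_{\Gamma}(\alpha_{j})}\ra = \delta_{ij}$. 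Consequently, for $u = \sum_{i} u_{i}\overline{H_{\Gamma}(\alpha_{i})}$ and $v = \sum_{j} v_{j}\overline{H_{\Gamma}(\alpha_{j})}$ in $\Gamma\theta_{H}$ one has $\la u,v\ra = \sum_{i} u_{i}v_{i} = \la u,v\ra_{H}$; that is, the $H$-inner product is precisely the restriction of the ordinary inner product to $\Gamma\theta_{H}$.

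First I would use this coincidence to identify $(\Gamma\theta_{H}, \la\cdot,\cdot\ra_{H})$ with Euclidean $\RR^{t}$ through the coordinate isometry $u \mapsto (u_{1},\ldots,u_{t})$. Under this isometry the set $\Lambda_{0}\theta_{H}$, which is a genuine lattice by Lemma~\ref{Lem:LambdaZeroTheta} and full-rank by the definition of a lattice in $\Gamma\theta_{H}$, becomes a full-rank lattice $L \subseteq \RR^{t}$; its $H$-dual $(\Lambda_{0}\theta_{H})^{\ast_{H}}$ becomes the ordinary dual $L^{\ast}$; and the $H$-theta series $\Theta_{\Lambda_{0}\theta_{H}}^{H}$ becomes the ordinary theta series of $L$ in dimension $t$, since $\la x,x\ra_{H}$ is carried to the squared Euclidean norm and hence the exponents $q^{\la x,x\ra_{H}}$ are preserved term by term.

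Next I would simply invoke Theorem~\ref{Thm:JacobiFormula} for the lattice $L$ in $\RR^{t}$, that is, with the ambient dimension $n$ replaced by $t$. This yields
\[
	\vartheta_{L^{\ast}}(z)
	=
	(\det L)^{1/2}
	(i/z)^{t/2}
	\vartheta_{L}(-1/z),
\]
and transporting back through the isometry, using $\det L = \det(\Lambda_{0}\theta_{H})$, gives exactly the claimed identity.

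The main obstacle, and essentially the only nonroutine point, is verifying the orthonormality of the $\overline{H_{\Gamma}(\alpha_{i})}$ and thereby the coincidence of the two inner products; once this is secured, everything else is transport of structure along an isometry followed by a direct citation. I would take care to note that $\det(\Lambda_{0}\theta_{H})$ in the statement is to be read as the lattice determinant computed from a $\ZZ$-basis expressed in the orthonormal frame $\{\overline{H_{\Gamma}(\alpha_{i})}\}$, via the Gram matrix $(\la e_{i},e_{j}\ra_{H})$, so that it agrees with $\det L$; this is automatic from the isometry but deserves to be stated explicitly.
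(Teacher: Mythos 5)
Your proof is correct and matches the argument the paper evidently intends: the paper states Theorem~\ref{Thm:GLatticeJacobiFormula} with no proof at all, and the natural route is precisely yours --- observe that the vectors $\overline{H_{\Gamma}(\alpha_{i})}$ form an orthonormal frame for the ordinary inner product (entries $1/\sqrt{m_{i}}$ on disjoint supports), so that $\la\cdot,\cdot\ra_{H}$ is the restriction of the Euclidean inner product, identify $\Lambda_{0}\theta_{H}$ with a full-rank lattice in $\RR^{t}$ (full rank being built into the paper's definition of a lattice in $\Gamma\theta_{H}$, granted by Lemma~\ref{Lem:LambdaZeroTheta}), and apply Theorem~\ref{Thm:JacobiFormula} in dimension $t$, noting that duals and determinants transport along the isometry. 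Your explicit verification of the orthonormality and of $\det L=\det(\Lambda_{0}\theta_{H})$ supplies exactly the details the paper leaves implicit.
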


\section{From $G$-codes to $G$-lattices}\label{Sec:GcodetoGlattice}

There is a lot of research that established the relationship 
between codes and lattices, for instance see~\cite{BDHO1999,Munemasa,Runge}.
The well known connection between the weight enumerators
of codes and the theta series of lattices has been formed
in these literatures using special types of theta functions.
In this section,
we extend this connection for higher genus cases by presenting 
an analogous relationship between the 
complete weight enumerators of $G$-codes and 
the theta series of $G$-lattices.
To do so, 
we restrict our discussions over the class of finite Frobenius rings
that includes only the ring~$\ZZ_{k}$ of integers modulo~$k \geq 2$ 
and finite field~$\FF_{p}$ of order~$p$, where~$p$ is prime.
More precisely,
by finite Frobenius ring~$R_{0}$,
we mean either~$\ZZ_{k}$ or~$\FF_{p}$.
For notation and terminology used in this section,
we refer the readers to the previous sections.

Let $R_{0}$ be a finite Frobenius ring.
Let~$C$ be a linear code of length~$n$ over~$R_{0}$. 
Then we can construct a lattice~$\Lambda(C)$ from~$C$ 
by the well-known Construction~$A$ as follows 
\[
	\Lambda(C)
	:=
	\frac{1}{\sqrt{|R_{0}|}}
	\{
		x \in \ZZ^{n}
		\mid
		\rho(x) \in C
	\},
\]
 where the map
$\rho : \ZZ^{n} \to R_{0}^{n}$ defined by
$$\rho(x_{1},\ldots,x_{n}) = (x_{1}\pmod{|R_{0}|},\ldots,x_{n}\pmod{|R_{0}|}).$$

\begin{thm}\label{Thm:GcodeGlattice1}
	Let $C$ be a linear code of length~$n$ over finite Frobenius ring~$R_{0}$. 
	Assume that $G$ is a finite permutation group on 
	the coordinates of $R_{0}^{n}$. 
	Then~$C$ is a $G$-code if and only if $\Lambda(C)$ is a $G$-lattice.
\end{thm}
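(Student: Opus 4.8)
The plan is to reduce the biconditional to a single intertwining property of the reduction map $\rho$, since Construction~$A$ already guarantees that $\Lambda(C)$ is a lattice for every linear code~$C$. Thus the entire content of the statement is the equivalence of $G$-invariance: $C$ is a $G$-code precisely when $Cg \subseteq C$ for all $g \in G$, while $\Lambda(C)$ is a $G$-lattice precisely when $\Lambda(C)g \subseteq \Lambda(C)$ for all $g \in G$ (both inclusions being equalities because $G$ is a group). The key observation I would record first is that $\rho$ commutes with the $G$-action: since $G$ acts by the same coordinate permutation on $\ZZ^{n}$ (hence on $\RR^{n}$) and on $R_{0}^{n}$, and reduction modulo $|R_{0}|$ is applied coordinatewise, the defining formula $vg = (x_{1},\ldots,x_{n})$ with $x_{i} = v_{ig^{-1}}$ yields
\[
	\rho(xg) = \rho(x)\,g
	\qquad
	\text{for all } x \in \ZZ^{n} \text{ and } g \in G.
\]
I would also note that permuting coordinates preserves $\ZZ^{n}$ and commutes with the global scalar $1/\sqrt{|R_{0}|}$.

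For the forward direction, assume $C$ is a $G$-code and take an arbitrary $\tfrac{1}{\sqrt{|R_{0}|}}\,x \in \Lambda(C)$ with $x \in \ZZ^{n}$ and $\rho(x) \in C$, together with $g \in G$. Then $\bigl(\tfrac{1}{\sqrt{|R_{0}|}}\,x\bigr)g = \tfrac{1}{\sqrt{|R_{0}|}}\,(xg)$ with $xg \in \ZZ^{n}$, and the intertwining identity gives $\rho(xg) = \rho(x)g \in C$ by $G$-invariance of~$C$; hence $\tfrac{1}{\sqrt{|R_{0}|}}\,(xg) \in \Lambda(C)$, so $\Lambda(C)$ is $G$-invariant.

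For the converse, assume $\Lambda(C)$ is a $G$-lattice and fix $c \in C$. Because $\rho$ is surjective---it is coordinatewise reduction for both $R_{0} = \ZZ_{k}$ and $R_{0} = \FF_{p}$---I can choose a lift $x \in \ZZ^{n}$ with $\rho(x) = c$, so that $\tfrac{1}{\sqrt{|R_{0}|}}\,x \in \Lambda(C)$. Then for any $g \in G$ the $G$-invariance of $\Lambda(C)$ gives $\tfrac{1}{\sqrt{|R_{0}|}}\,(xg) \in \Lambda(C)$, which forces $\rho(xg) \in C$; by the intertwining identity $\rho(xg) = \rho(x)g = cg$, so $cg \in C$ and $C$ is a $G$-code.

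The argument is essentially bookkeeping, and I do not expect a substantial obstacle: the lattice property of $\Lambda(C)$ is furnished by Construction~$A$, and the $\ZZ G$-module structure transfers through the commuting relation between $\rho$ and the two permutation actions. The only step deserving care is the intertwining identity itself, whose verification amounts to confirming that the two coordinate-permutation actions genuinely agree and that reduction is performed entrywise.
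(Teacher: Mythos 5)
Your proof is correct and takes essentially the same route as the paper's: the paper writes each lattice element as $\tfrac{1}{\sqrt{|R_{0}|}}(c+|R_{0}|z)$ with $c \in C$, $z \in \ZZ^{n}$ and observes $xg = \tfrac{1}{\sqrt{|R_{0}|}}(cg+|R_{0}|zg)$, which is exactly your intertwining identity $\rho(xg)=\rho(x)g$ made explicit. If anything, your write-up is the more careful one, since it separates the two implications cleanly and records the surjectivity of $\rho$ needed to lift codewords in the converse direction.
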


\begin{proof}
	Let $\Lambda(C)$ be a $G$-lattice.
	Let $x \in \Lambda(C)$. 
	Then $x$ can be written in the form:
	$$x = \dfrac{1}{\sqrt{|R_{0}|}} (c+|R_{0}|z) \in \Lambda(C),$$
	for $c \in C$ and $z \in \ZZ^{n}$.
	Then for any $g \in G$, 
	$$xg = \frac{1}{\sqrt{|R_{0}|}}(cg+|R_{0}|zg) = \frac{1}{\sqrt{|R_{0}|}}(c'+|R_{0}|z'),$$ 
	where $c' = cg$ and $z' = zg $.
	Since $z' \in \ZZ^{n}$, therefore $ xg\in \Lambda(C)$
	if only if $c' \in C$. 
	Hence~$C$ is a $G$-code if and only if $\Lambda(C)$ is a $G$-lattice.
\end{proof}

\begin{thm}\label{Thm:GcodeGlattice2}
	Let $R_{0}$ be a finite Frobenius ring. 
	Let $C$ be a $G$-code, 
	where $G$ is a finite permutation group on 
	the coordinates of $R_{0}^{n}$.
	Let	$H$ be a subgroup of~$G$ 
	such that~$|H|$ has an inverse in~$R_{0}$.
	Define
	$\Lambda_{0}(C):=\{v \in \Lambda(C) \mid v\theta_{H} \in \Lambda(C)\}$.
	Then $\Lambda_{0}(C)\theta_{H}$ is a lattice.
	Moreover, $\Lambda_{0}(C)\theta_{H} = \Lambda(C\theta_{H})$.
\end{thm}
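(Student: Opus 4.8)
The plan is to derive both assertions from the structure theory of $G$-lattices already in place, reducing the identity to a statement about $H$-fixed vectors. First I would record that, by Theorem~\ref{Thm:GcodeGlattice1}, the Construction~$A$ lattice $\Lambda(C)$ is a $G$-lattice; hence $\Lambda_0(C)$ is exactly the set $\Lambda_0$ attached to the $G$-lattice $\Lambda(C)$ in Lemma~\ref{Lem:LambdaZeroTheta}, and that lemma immediately gives that $\Lambda_0(C)\theta_H$ is a lattice. This disposes of the first claim, so the work is entirely in the \emph{Moreover} part.

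The key reduction I would make is the clean description
\[
  \Lambda_0(C)\theta_H = \Lambda(C)\cap\Gamma\theta_H .
\]
This uses only that $\theta_H$ is idempotent and that $\Gamma\theta_H$ is precisely the space of $H$-fixed vectors. Indeed, if $v\in\Lambda_0(C)$ then $v\theta_H\in\Lambda(C)$ by definition of $\Lambda_0(C)$ and $v\theta_H\in\Gamma\theta_H$, giving the inclusion $\subseteq$; conversely, any $w\in\Lambda(C)\cap\Gamma\theta_H$ satisfies $w\theta_H=w\in\Lambda(C)$, so $w\in\Lambda_0(C)$ and $w=w\theta_H\in\Lambda_0(C)\theta_H$.

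It then remains to identify $\Lambda(C)\cap\Gamma\theta_H$ with $\Lambda(C\theta_H)$. The bridge is the purely code-theoretic identity $C\theta_H = C\cap V\theta_H$, valid because $C$ is $H$-invariant (so $C\theta_H\subseteq C$ since $|H|^{-1}\in R_0$) and because $\theta_H$ restricts to the identity on its image $V\theta_H$. Writing $w\in\Lambda(C)\cap\Gamma\theta_H$ as $\tfrac{1}{\sqrt{|R_0|}}x$ with $x\in\ZZ^n$, the condition $w\in\Gamma\theta_H$ forces $x$ to be constant on each $H$-orbit, while $w\in\Lambda(C)$ forces $\rho(x)\in C$; since $\rho(x)$ is then $H$-constant, $\rho(x)\in C\cap V\theta_H = C\theta_H$. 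Running this equivalence backwards realizes $\Lambda(C)\cap\Gamma\theta_H$ as the Construction~$A$ lattice of the orbit code $C\theta_H$ sitting inside the $t$-dimensional space $\Gamma\theta_H$, which is the meaning of $\Lambda(C\theta_H)$ here.

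The step I expect to require the most care is bookkeeping the two distinct normalizations of $\theta_H$ — the rational averaging operator $\tfrac{1}{|H|}\sum_{h}h$ acting on $\Gamma=\RR^n$, which produces genuine orbit-averages, versus the $R_0$-linear operator using $|H|^{-1}\in R_0$ on $V=R_0^n$ — and in particular checking that reduction modulo $|R_0|$ intertwines them on $H$-constant integer vectors. Tied to this is the scaling by the orbit lengths $m_i$ implicit in the basis vectors $\overline{H_\Gamma(\alpha_i)}$ (each of Euclidean norm $1$): one must confirm that the $\ZZ$-structure cut out on $\Gamma\theta_H$ by the $H$-constant integer vectors $x$ matches, orbit by orbit, the lattice produced by Construction~$A$ from $C\theta_H$ under the $H$-inner product. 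Verifying this compatibility is what makes the identity exact rather than merely an equality up to commensurability.
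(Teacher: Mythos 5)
Your handling of the first claim is exactly the paper's: the paper's proof consists of citing Theorem~\ref{Thm:GcodeGlattice1} and Lemma~\ref{Lem:LambdaZeroTheta} for lattice-hood and dismissing the rest as easy, so for the \emph{moreover} part you are supplying an argument the paper omits entirely. Your two reductions are correct and cleanly verified: $\Lambda_0(C)\theta_H=\Lambda(C)\cap\Gamma\theta_H$ follows from idempotency of $\theta_H$ and the fact that $\Gamma\theta_H$ is the $H$-fixed subspace, and $C\theta_H=C\cap V\theta_H$ follows since $C$ is an $RG$-submodule and $|H|^{-1}\in R_0$ (both inclusions as you argue). Up to that point the proposal is sound and strictly more rigorous than the source.

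The genuine gap is precisely the step you defer, and when you carry it out it does \emph{not} close in the way you anticipate. An element of $\Lambda(C)\cap\Gamma\theta_H$ has the form $\frac{1}{\sqrt{|R_0|}}x$ with $x\in\ZZ^n$ constant on $H$-orbits, say with value $b_i\in\ZZ$ on the $i$-th orbit and $\rho(b_1,\ldots,b_t)$ a coefficient vector of a codeword of $C\theta_H$. Since the $0$--$1$ indicator of the $i$-th orbit equals $\sqrt{m_i}\,\overline{H_\Gamma(\alpha_i)}$, the coordinates of this vector in the orthonormal basis of $\Gamma\theta_H$ are
\[
\frac{1}{\sqrt{|R_0|}}\bigl(b_1\sqrt{m_1},\ldots,b_t\sqrt{m_t}\bigr),
\]
so $\Lambda_0(C)\theta_H$ is the image under $\diag(\sqrt{m_1},\ldots,\sqrt{m_t})$ of the naive rank-$t$ Construction~$A$ lattice of the orbit code; the two coincide only when every $m_i=1$. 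Hence ``the $\ZZ$-structure cut out by $H$-constant integer vectors matches, orbit by orbit, Construction~$A$ under the $H$-inner product'' is false as an expectation: the identity $\Lambda_0(C)\theta_H=\Lambda(C\theta_H)$ is exact only if $\Lambda(C\theta_H)$ is \emph{defined} by applying the Construction~$A$ formula inside $\Gamma\theta_H$ via the indicator vectors $\overline{H_V(\alpha_i)}$ (equivalently, replacing $\ZZ^n$ by $\ZZ^n\cap\Gamma\theta_H$), not via the unit vectors $\overline{H_\Gamma(\alpha_i)}$. Note also that the paper's literal definition of $\Lambda(\cdot)$, applied to $C\theta_H\subseteq R_0^n$, produces a rank-$n$ lattice (it contains $\sqrt{|R_0|}\,\ZZ^n$), which cannot equal the rank-$t$ lattice $\Lambda_0(C)\theta_H$, so some reinterpretation is forced in any case; your reading is the one that makes the theorem true, but it must be stated as a definition and backed by the short coordinate computation above rather than left as a compatibility to be checked.
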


\begin{proof}
	By using Lemma~\ref{Lem:LambdaZeroTheta}
	and Theorem~\ref{Thm:GcodeGlattice1}, 
	it is straightforward to show that 
	$\Lambda_{0}(C)\theta_{H}$ is a lattice.
	And the remaining part is easy to proof.
\end{proof}

\subsection{Theta series of lattices with group actions}

We recall~\cite{Runge} for the notations of theta functions. 
Let~$g$ be a positive integer.
Define 
$\HH_{g}:=\{\tau \in \Mat_{g \times g}(\CC) \mid \tau \text{ is symmetric, }\Ima(\tau)>0\}$.
For $a \in R_{0}^{g}$
and~$\tau \in \HH_{g}$,
we take the theta functions as follows:

\begin{align*}
	f_{a}(\tau)
	:=
	\sum_{\substack{b \in \ZZ^{g}\\ b \equiv a \pmod{|R_{0}|}}}
	\exp 
	\left(
	\frac{1}{|R_{0}|}
	\pi {i}
	(^{t}b\tau b)
	\right).	
\end{align*}

Moreover, the \emph{$g$-th theta series} of a lattice~$\Lambda$
is defined as follows:
\[
	\vartheta_{\Lambda}^{(g)}(\tau)
	:=
	\sum_{x_{1},\ldots,x_{g} \in \Lambda}
	\exp\left( \pi {i}
	\tr(\tau(\la x_{j},x_{k}\ra)_{1\leq j,k\leq g})\right), 
\]
where $\tau \in \HH_{g}$.

\begin{df}
	Let $G$ be a finite permutation group on 
	the coordinates of $\Gamma$. 
	Let $D$ be a lattice of~$\Gamma\theta_{H}$,
	where~$H$ is a subgroup of~$G$.
	Then the \emph{$g$-th $H$-theta series} of $D$ 
	is defined as:
	\[
		\vartheta_{D}^{H,(g)}
		(\tau)
		:=
		\sum_{x_{1},\ldots,x_{g} \in D}
		\exp \left(\pi {i}
		\tr(\tau(\la x_{j},x_{k}\ra_{H})_{1\leq j,k\leq g})\right).
	\]
	where 
	$\tau \in \HH_{g}$ and $x_{i}$'s are in form~(\ref{Equ:Lattorbit}).
	Obviously, if $H$ is trivial, 
	then the  $g$-th $H$-theta series 
	becomes the \emph{$g$-th theta series}.
	Moreover, when $g=1$ considering $H$ is trivial, 
	can have the ordinary theta series. 
\end{df}

\begin{thm}\label{Thm:WeightEnumtoThetaSeries}
	Let $C$ be a $G$-code over~$R_{0}$, 
	where $G$ is a finite permutation group on 
	the coordinates of~$V(=R_{0}^{n})$.
	Let	$H$ be a subgroup of~$G$ 
	such that~$|H|$ has an inverse in~$R_{0}$. 
	Then
	\[
		\vartheta_{\Lambda_{0}(C)\theta_{H}}^{H,(g)}(\tau)
		=
		\cwe_{H}^{(g)}
		(C\theta_{H}:{f_{a}(\tau)}_{a \in R_{0}^{g}}).
	\]
\end{thm}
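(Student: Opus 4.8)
The plan is to reduce the statement to the classical Construction~A computation, carried out in the $t$-dimensional orbit space $\Gamma\theta_{H}$ and with $g$ copies of the lattice. First I would apply Theorem~\ref{Thm:GcodeGlattice2} to rewrite $\Lambda_{0}(C)\theta_{H}=\Lambda(C\theta_{H})$, so that the left-hand side becomes the $g$-th $H$-theta series of a genuine Construction~A lattice inside $\Gamma\theta_{H}$. Expressing vectors of $\Gamma\theta_{H}$ in the orbit basis $\overline{H_{\Gamma}(\alpha_{1})},\ldots,\overline{H_{\Gamma}(\alpha_{t})}$, the form $\la\cdot,\cdot\ra_{H}$ is exactly the standard inner product on the orbit coordinates; hence $\Lambda(C\theta_{H})$ is the set of vectors $\frac{1}{\sqrt{|R_{0}|}}\sum_{i=1}^{t}y_{i}\overline{H_{\Gamma}(\alpha_{i})}$ with $y\in\ZZ^{t}$ and $\rho(y)\in C\theta_{H}$ (coordinates read off in the orbit basis).

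Next I would compute the Gram data of a $g$-tuple. For $x_{j}=\frac{1}{\sqrt{|R_{0}|}}\sum_{i=1}^{t}y_{j,i}\overline{H_{\Gamma}(\alpha_{i})}$ with $y_{j}\in\ZZ^{t}$, one has $\la x_{j},x_{k}\ra_{H}=\frac{1}{|R_{0}|}\sum_{i=1}^{t}y_{j,i}y_{k,i}$. Setting $b_{i}:=(y_{1,i},\ldots,y_{g,i})\in\ZZ^{g}$, the key algebraic identity is
\[
	\tr\bigl(\tau(\la x_{j},x_{k}\ra_{H})_{1\le j,k\le g}\bigr)
	=
	\frac{1}{|R_{0}|}\sum_{i=1}^{t}{}^{t}b_{i}\,\tau\,b_{i},
\]
which makes the exponential in the definition of $\vartheta^{H,(g)}$ factor as a product over the $t$ orbit coordinates.

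Then I would reorganise the lattice sum by codewords and by coordinates. Summing over all $x_{1},\ldots,x_{g}\in\Lambda(C\theta_{H})$ is the same as summing over all $y_{1},\ldots,y_{g}\in\ZZ^{t}$ with $\rho(y_{j})\in C\theta_{H}$; grouping by the underlying codewords $c_{1},\ldots,c_{g}\in C\theta_{H}$ imposes $y_{j,i}\equiv c_{j,i}\pmod{|R_{0}|}$, and the product factorisation lets the whole sum split coordinate by coordinate. For the $i$-th coordinate the inner sum is, by the very definition of $f_{a}$, equal to $f_{a}(\tau)$ with $a=(c_{1,i},\ldots,c_{g,i})\in R_{0}^{g}$. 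Collecting equal values gives $\prod_{i=1}^{t}f_{(c_{1,i},\ldots,c_{g,i})}(\tau)=\prod_{a\in R_{0}^{g}}f_{a}(\tau)^{n_{a}^{H}(c_{1},\ldots,c_{g})}$, and summing over codewords reproduces $\cwe_{H}^{(g)}(C\theta_{H}:\{f_{a}(\tau)\}_{a\in R_{0}^{g}})$, as required.

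I expect the genuinely delicate point to be bookkeeping rather than analysis: one must verify that the orbit-basis coordinatisation identifies $\la\cdot,\cdot\ra_{H}$ with the flat inner product of Construction~A (the factor $1/|R_{0}|$ being absorbed by the $\frac{1}{\sqrt{|R_{0}|}}$ scaling), and that $n_{a}^{H}(c_{1},\ldots,c_{g})$ counts exactly the orbit coordinates at which the $g$-tuple of codewords takes the value $a$, matching the definition of $\cwe_{H}^{(g)}$. The interchange of summation and product is justified by absolute convergence for $\tau\in\HH_{g}$, so no analytic subtlety arises beyond the classical genus-one case.
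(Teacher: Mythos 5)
Your proposal is correct and takes essentially the same route as the paper's proof: both invoke Theorem~\ref{Thm:GcodeGlattice2} to identify $\Lambda_{0}(C)\theta_{H}$ with the Construction~$A$ lattice of $C\theta_{H}$, and both rest on the same orbit-coordinate bookkeeping, namely the trace identity $\tr\bigl(\tau(\la x_{j},x_{k}\ra_{H})_{1\le j,k\le g}\bigr)=\frac{1}{|R_{0}|}\sum_{i=1}^{t}{}^{t}b_{i}\tau b_{i}$ that lets the exponential factor over the $t$ orbits into the theta functions $f_{a}(\tau)$, with the $1/|R_{0}|$ absorbed by the $1/\sqrt{|R_{0}|}$ scaling. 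The only difference is directional and cosmetic: the paper expands the product $\prod_{j=1}^{t}f_{a_{j}}(\tau)$ outward into the lattice sum, whereas you factor the lattice sum inward into that product.
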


\begin{proof}
	Let $C$ be a $G$-code over~$R_{0}$.
	Assume that $H$ be any subset of~$G$ such that~$|H|$
	has an inverse in~$R_{0}$. 
	Let
	$H_{V}(\alpha_{1}),\ldots,H_{V}(\alpha_{t})$ 
	be the orbits of the coordinates of $V$ under the action of $H$.
	Then each element~${c} \in C\theta_{H}$ 
	can be written in the following form:
	\begin{equation*}
		c	
		= 
		\sum_{j = 1}^{t}	
		c_{j} \overline{H_{V}(\alpha_{j})}.
	\end{equation*}
	Let $a_{1}, \ldots, a_{t} \in R_{0}^{g}$ such that
	\[
		\begin{pmatrix}
			a_{1} & \cdots & a_{t}
		\end{pmatrix}
		=
		\begin{pmatrix}
			u'_{1}\\
			\vdots\\
			u'_{g}
		\end{pmatrix}
		\in
		\Mat_{g\times t}
		(R_{0}),
	\]
	where the row vector $u'_{k} = (u'_{k1},\ldots,u'_{kt})$
	satisfies the condition that
	\begin{equation}\label{Equ:CodetoLattice}
		{u}_{k}	
		= 
		\sum_{j = 1}^{t}	
		u'_{kj} \overline{H_{V}(\alpha_{j})} \in C\theta_{H}.
	\end{equation}
Now
\begin{align*}
	\prod_{j=1}^{t}
	f_{a_{j}}(\tau)
	& =
	\prod_{j=1}^{t}
	\sum_{\substack{b \in \ZZ^{g}\\ b \equiv a_{j} \pmod{|R_{0}|}}}
	\exp
	\left(
	\frac{1}{|R_{0}|} 
	\pi {i}
	(^{t}b\tau b)
	\right)\\
	& =
	\sum_{\substack{b_{j} \in \ZZ^{g}\\ b_{j} \equiv a_{j} \pmod{|R_{0}|}\\ (1 \leq j \leq t)}}
	\prod_{j=1}^{t}
	\exp 
	\left(
	\frac{1}{|R_{0}|}
	\pi {i}
	(^{t}b_{j}\tau b_{j})
	\right)\\
	& =
	\sum_{\substack{b_{j} \in \ZZ^{g}\\ b_{j} \equiv a_{j} \pmod{|R_{0}|}\\ (1 \leq j \leq t)}}
	\exp \pi {i}
	\sum_{j=1}^{t}
	\left(\frac{1}{|R_{0}|}\tr(^{t}b_{j}\tau b_{j})\right)\\
	& =
	\sum_{\substack{b_{j} \in \ZZ^{g}\\ b_{j} \equiv a_{j} \pmod{|R_{0}|}\\ (1 \leq j \leq t)}}
	\exp \pi {i}
	\left(\frac{1}{|R_{0}|}\tr(\tau \sum_{j=1}^{t} {^{t}b_{j}} b_{j})\right)\\
	& =	
	\sum_{\substack{\alpha_{j} \in \ZZ^{n}\\ \alpha_{j} \equiv u_{j} \pmod{|R_{0}|}\\ (1 \leq j \leq g)}}
	\exp \pi {i}
	\left(\frac{1}{|R_{0}|}\tr(\tau (\la\alpha_{j},\alpha_{k}\ra_{H}))\right)\\
	& =
	\sum_{\substack{x_{j} \in \rho^{-1}(u_{j})\\ (1 \leq j \leq g)}}
	\exp \pi {i}
	\left(\tr(\tau (\la x_{j}, x_{k}\ra_{H}))\right).
\end{align*}
	Summing all such $(u_{1},\ldots,u_{g}) \in C\theta_{H}$, 
	and by Theorem~\ref{Thm:GcodeGlattice2}, we have
	\begin{align*}
		\cwe_{H}^{(g)}(C\theta_{H}:\{f_{a}(\tau)\}_{a\in R_{0}^{g}})
		& =
		\sum_{{u}_{1},\ldots,{u}_{g} \in C}
		\sum_{\substack{x_{j} \in \rho^{-1}(u_{j})\\ (1 \leq j \leq g)}}
		\exp\left(\tr(\tau (\la x_{j}, x_{k}\ra_{H}))_{1\leq j,k \leq g}\right)\\
		& =
		\vartheta_{\Lambda_{0}(C)\theta_{H}}^{H,(g)}(\tau)
	\end{align*}
	This completes the proof.
\end{proof}

\section{Jacobi polynomials and Jacobi theta series}\label{Sec:JacobiPolyTheta}

In this section, 
we present the Jacobi generalization of Astumi's MacWilliams identity.
Moreover, we give an analogue Theorem~\ref{Thm:WeightEnumtoThetaSeries}
for the complete Jacobi
polynomials of codes over finite Frobenius rings 
with group action.

\begin{df}\label{Def:CompleteJac}
	Let $R$ be a finite Frobenius ring.
	Assume that $G$ is a finite permutation group on 
	the coordinates of $V (= R^{n})$.
	Let~$H$ be a subgroup of~$G$
	satisfying $|H|$ has an inverse in~$R$. 
	Let $D$ be a left (or right) $R$-submodule in~$V\theta_{H}$.
	Then the \emph{complete $H$-Jacobi polynomial} 
	attached to a set $T \subseteq E$ of 
	coordinate places of~$D$ is defined as:
	\[
	\CJ_{D,T}^{H}(\{x_{a},y_{a}\}_{a \in R}) 
	:=
	\sum_{u\in D}
	\prod_{a \in R}
	x_{a}^{n_{a,T}^{H}(u)}
	y_{a}^{n_{a, E\backslash T}^{H}(u)},
	\]
	where $n_{a,X}^{H}(u)$ is the number of~$i$'s 
	in~$X \subseteq E$ such that $a= u_{i}$ 
	considering $u$ as in form~(\ref{Equ:torbit}).
	Obviously, if $H$ is trivial, 
	then the complete $H$-Jacobi polynomials 
	become the \emph{complete Jacobi polynomials}.
	For detail discussions on the Jacobi polynomials,
	we refer the readers to~\cite{CM2022,CMO2022,CMOT2023,CIT20xx,Ozeki}.
	If $C$ is a left (or right) $G$-code of length~$n$ over~$R$,
	we call $\CJ_{C\theta_{H},T}^{H}$ 
	as the \emph{ $H$-equivariant complete Jacobi polynomial} of~$C$. 
	Moreover, if $|G|$ has an inverse in~$R$, 
	then we call $\CJ_{C\theta,T}^{G}$
	the \emph{ $G$-equivariant complete Jacobi polynomial} of~$C$.
\end{df}

The MacWilliams-type identity for the Jacobi polynomials 
of codes over the finite fields 
was first given by Ozeki~\cite{Ozeki}. 
Now we have a generalization of the MacWilliams identity 
for the equivariant complete Jacobi polynomials of $G$-codes over a 
finite Frobenius ring.

\begin{thm}\label{Thm:EquivJacobiMacWilliams}
	Let $R$ be a finite Frobenius ring. 
	Assume that $G$ is a finite permutation group 
	acts on the coordinates of $R^{n}$.
	Let~$H$ be a subgroup of~$G$ 
	such that $|H|$ has an inverse in~$R$.
	If $C$ is a $G$-code of length~$n$ over~$R$,
	then we have the relation:
	\begin{align*}
		\CJ_{({^{\perp}C\theta_{H}})M_{H},T}^{H}
		&(\{x_{a},y_{a}\}_{a \in R})\\
		& =
		\dfrac{1}{|C\theta_{H}|} 
		\CJ_{{C\theta_{H}},T}^{H}
		\left( 
		\left\{
		\sum_{b \in R}
		\chi
		\left(
		a b
		\right) 
		x_{b}
		\right\}_{a \in R},
		\left\{
		\sum_{b \in R}
		\chi
		\left(
		a b
		\right) 
		y_{b}
		\right\}_{a \in R}
		\right),
	\end{align*}
	where $M_{H}$ is the $H$-orbit length matrix and 
	$\chi$ is a generating character associated with~$R$.
\end{thm}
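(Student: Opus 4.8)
The plan is to mirror the reduction already used for Theorems~\ref{Thm:EquivMacWilliams}, \ref{Thm:CompWeightMacWilliams} and \ref{Thm:gthCWEMacWilliams}: prove the ordinary complete-Jacobi MacWilliams identity for the pair $(C\theta_{H}, {}^{\perp_{H}}(C\theta_{H}))$ sitting inside $V\theta_{H}$, and then rewrite the $H$-dual by means of Theorem~\ref{Thm:CodeThetaMatrix}. First I would record that $V\theta_{H}$ is a free $R$-module of rank $t$ with basis $\{\overline{H_{V}(\alpha_{i})}\}_{i=1}^{t}$, and that in these coordinates the $H$-inner product $(u,v)_{H}=\sum_{i=1}^{t}u_{i}v_{i}$ is simply the standard dot product on $R^{t}$. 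Hence $C\theta_{H}$ is an $R$-linear code of length $t$, its $H$-dual $^{\perp_{H}}(C\theta_{H})$ is the ordinary dual, the coordinate set $E=[t]$ and its chosen subset $T$ carry over verbatim, and since left multiplication by the diagonal matrix $M_{H}$ fixes every coordinate position, $\CJ_{(^{\perp}C\theta_{H})M_{H},T}^{H}$ is a well-defined complete $H$-Jacobi polynomial attached to the same $T$.

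The substantive ingredient is the classical complete-Jacobi MacWilliams identity for a code $D$ of length $t$ over a finite Frobenius ring with generating character $\chi$, namely
\[
\CJ_{{}^{\perp_{H}}D,T}^{H}(\{x_{a},y_{a}\}_{a\in R})
=\frac{1}{|D|}\CJ_{D,T}^{H}\left(\left\{\sum_{b\in R}\chi(ab)x_{b}\right\}_{a\in R},\left\{\sum_{b\in R}\chi(ab)y_{b}\right\}_{a\in R}\right),
\]
which I would establish by the usual character-sum argument. Writing each codeword as $u=\sum_{i}u_{i}\overline{H_{V}(\alpha_{i})}$, the right-hand side expands to
\[
\frac{1}{|D|}\sum_{u\in D}\prod_{i\in T}\left(\sum_{b\in R}\chi(u_{i}b)x_{b}\right)\prod_{i\in E\setminus T}\left(\sum_{b\in R}\chi(u_{i}b)y_{b}\right)
=\frac{1}{|D|}\sum_{v\in R^{t}}\left(\prod_{i\in T}x_{v_{i}}\prod_{i\in E\setminus T}y_{v_{i}}\right)\sum_{u\in D}\chi((u,v)_{H}).
\]
The inner sum equals $|D|$ when $v\in{}^{\perp_{H}}D$ and $0$ otherwise, so the right-hand side collapses to $\CJ_{{}^{\perp_{H}}D,T}^{H}(\{x_{a},y_{a}\})$, as desired. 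This orthogonality relation is the single point where the Frobenius hypothesis is used: if $v\notin{}^{\perp_{H}}D$ then $(u_{0},v)_{H}\neq0$ for some $u_{0}\in D$, and because $\chi$ is a generating character the map $r\mapsto\chi((u_{0},v)_{H}\,r)$ is a nontrivial character, forcing the character sum over $D$ to vanish.

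Applying the displayed identity with $D=C\theta_{H}$ yields the relation with $^{\perp_{H}}(C\theta_{H})$ on the left, and then Theorem~\ref{Thm:CodeThetaMatrix} gives $^{\perp_{H}}(C\theta_{H})=(^{\perp}C\theta_{H})M_{H}$; substituting this equality into the left-hand side produces exactly the asserted formula. I expect the only genuine obstacle to be the orthogonality step above—verifying that the generating character of $R$ still separates $^{\perp_{H}}(C\theta_{H})$ from the remaining elements of $R^{t}$ with respect to the $H$-inner product. Everything else, including the transfer from $^{\perp_{H}}(C\theta_{H})$ to $(^{\perp}C\theta_{H})M_{H}$ and the compatibility with the coordinate set $T$, is bookkeeping that reduces to the free-module identification of $V\theta_{H}$ with $R^{t}$ carried out in the first step.
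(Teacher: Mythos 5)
Your proposal is correct and follows essentially the same route as the paper: the paper's proof likewise consists of observing that the ordinary complete-Jacobi MacWilliams identity holds for $C\theta_{H}$ and its $H$-dual inside $V\theta_{H}$ (which the paper dismisses as straightforward) and then substituting $^{\perp_{H}}(C\theta_{H})=({}^{\perp}C\theta_{H})M_{H}$ from Theorem~\ref{Thm:CodeThetaMatrix}. Your character-sum verification simply fills in the step the paper omits, with the only point deserving a touch more care being the side-conventions (left versus right module action of $R$ on $\chi$) in the orthogonality argument over a possibly noncommutative Frobenius ring.
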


\begin{proof}
	The proof of the MacWilliams identity for the complete Jacobi polynomials
	of  $C\theta_{H}$ and its $H$-dual
	in $V\theta_{H}$ over finite Frobenius rings, where $V = R^{n}$, 
	is straightforward. Therefore by 
	Theorem~\ref{Thm:CodeThetaMatrix}, we can have the result.
\end{proof}

\begin{df}
	Let $G$ be a finite permutation group on 
	the coordinates of $\Gamma$. 
	Let $D$ be a lattice of~$\Gamma\theta_{H}$,
	where~$H$ is a subgroup of~$G$.
	Then the \emph{$H$-Jacobi theta series} of $D$ 
	with respect to $y \in D$ can be defined as:
	\[
		\vartheta_{\Lambda,y}^{H}(\tau, z)
		:=
		\sum_{x \in \Lambda}
		\exp
		\left( 
		\pi {i}
		\tau\la x,x\ra_{H}
		+
		2 {\pi} i
		z\la x,y\ra_{H}
		\right),
	\]
	where 
	$\tau \in \HH$, $z \in \CC$,
	and $x$ and $y$ are in form~(\ref{Equ:Lattorbit}).
\end{df}

Clearly, if $H$ is trivial, 
then the $H$-Jacobi theta series 
becomes the {ordinary Jacobi theta series}
introduced by Bannai and Ozeki~\cite{BO1996}. 
Moreover, they gave a correspondence between
the Jacobi polynomials of codes and Jacobi theta series
of lattices. In the following result, we give a $G$-code
analogue of the Bannai-Ozeki correspondence.
The proof of the theorem is straightforward.
So, we leave the proof for the readers.

\begin{thm}\label{Thm:JacPolytoThetaSeries}
Let $R_{0}$ be a finite Frobenius ring.
For $a \in R_{0}$
and~$\tau \in \HH$ and $z \in \CC$,
we take the following theta functions:

\begin{align*}
	\psi_{a}(\tau)
	&:=
	\sum_{\substack{b \in \ZZ\\ b \equiv a \pmod{|R_{0}|}}}
	\exp 
	\left(
	\frac{1}{|R_{0}|}
	\pi {i}\tau
	\la b, b\ra
	\right),\\
	\phi_{a}(\tau,z)
	& :=
	\sum_{\substack{b \in \ZZ\\ b \equiv a \pmod{|R_{0}|}}}
	\exp 
	\left(
	\frac{1}{|R_{0}|}
	\pi {i} \tau
	\la b,b \ra
	+
	2{\pi} i zb		
	\right).	
\end{align*}
Let $C$ be a $G$-code over~$R_{0}$, 
where $G$ is a finite permutation group on 
the coordinates of~$R_{0}^{n}$.
Let	$H$ be a subgroup of~$G$ 
such that~$|H|$ has an inverse in~$R_{0}$. Then
\[
	\CJ_{C\theta_{H},T}^{H}(\{x_{a} \leftarrow \psi_{a}(\tau),y_{a} \leftarrow \phi_{a}(\tau,z)\}_{a \in R_{0}})
\]
is an $H$-Jacobi theta series.
\end{thm}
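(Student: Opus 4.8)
The plan is to run the computation of Theorem~\ref{Thm:WeightEnumtoThetaSeries} one more time, now carrying the elliptic variable~$z$ through every step. First I would substitute $x_{a}\leftarrow\psi_{a}(\tau)$ and $y_{a}\leftarrow\phi_{a}(\tau,z)$ into Definition~\ref{Def:CompleteJac}. Writing each $c\in C\theta_{H}$ in the orbit form $c=\sum_{j=1}^{t}c_{j}\overline{H_{V}(\alpha_{j})}$ of~(\ref{Equ:torbit}), the exponent $n_{a,T}^{H}(c)$ counts the places $j\in T$ with $c_{j}=a$ while $n_{a,E\setminus T}^{H}(c)$ counts the places $j\in E\setminus T$ with $c_{j}=a$, so the product over $a\in R_{0}$ collapses into a product over orbit places:
\[
\CJ_{C\theta_{H},T}^{H}(\{\psi_{a}(\tau),\phi_{a}(\tau,z)\}_{a\in R_{0}})
=
\sum_{c\in C\theta_{H}}
\prod_{j\in T}\psi_{c_{j}}(\tau)
\prod_{j\in E\setminus T}\phi_{c_{j}}(\tau,z).
\]
Only the factors indexed by $E\setminus T$ carry the variable~$z$.

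Next, for each fixed $c$ I would expand every $\psi_{c_{j}}$ and $\phi_{c_{j}}$ as a sum over integers $b_{j}\equiv c_{j}\pmod{|R_{0}|}$ and interchange product and sum exactly as in the chain of equalities proving Theorem~\ref{Thm:WeightEnumtoThetaSeries}. This turns the product over orbit places into a single sum over the lattice points $x\in\Lambda_{0}(C)\theta_{H}=\Lambda(C\theta_{H})$ lying above~$c$, where I invoke Theorem~\ref{Thm:GcodeGlattice2} for the identification $\Lambda_{0}(C)\theta_{H}=\Lambda(C\theta_{H})$. The quadratic contributions $\tfrac{1}{|R_{0}|}\pi i\tau\la b_{j},b_{j}\ra$ assemble into $\pi i\tau\la x,x\ra_{H}$ precisely as in that proof, and the genuinely new feature is that the linear contributions $2\pi i z\,b_{j}$, present only for $j\in E\setminus T$, assemble into $2\pi i z\la x,y\ra_{H}$, where $y\in\Gamma\theta_{H}$ is the reference vector supported on the orbit places of $E\setminus T$. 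Hence, for each fixed $c$,
\[
\prod_{j\in T}\psi_{c_{j}}(\tau)\prod_{j\in E\setminus T}\phi_{c_{j}}(\tau,z)
=
\sum_{x\in\rho^{-1}(c)}
\exp\!\left(\pi i\tau\la x,x\ra_{H}+2\pi i z\la x,y\ra_{H}\right).
\]

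Finally I would sum over all $c\in C\theta_{H}$; by Theorem~\ref{Thm:GcodeGlattice2} the fibres $\rho^{-1}(c)$ exhaust $\Lambda_{0}(C)\theta_{H}$ as $c$ runs over $C\theta_{H}$, so the right-hand side is exactly $\vartheta_{\Lambda_{0}(C)\theta_{H},\,y}^{H}(\tau,z)$, an $H$-Jacobi theta series. The step I expect to be the main obstacle is the identification of the reference vector~$y$ together with the normalisation of the linear term: one must check that summing $2\pi i z\,b_{j}$ over $j\in E\setminus T$ reproduces $2\pi i z\la x,y\ra_{H}$ with the scaling inherited from the Construction~$A$ factor $1/\sqrt{|R_{0}|}$ and from the entries $1/\sqrt{m_{j}}$ of $\overline{H_{\Gamma}(\alpha_{j})}$. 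Everything else is the quadratic bookkeeping already carried out in Theorem~\ref{Thm:WeightEnumtoThetaSeries}, which I would cite rather than repeat.
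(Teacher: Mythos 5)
Your proposal follows exactly the route the paper intends: the paper actually omits the proof of this theorem (``The proof of the theorem is straightforward. So, we leave the proof for the readers.''), and the intended argument is precisely the genus-one Jacobi variant of the computation in Theorem~\ref{Thm:WeightEnumtoThetaSeries} that you describe --- substitute into Definition~\ref{Def:CompleteJac}, expand each $\psi_{c_j}$ and $\phi_{c_j}$ over integer representatives $b_j \equiv c_j \pmod{|R_0|}$, interchange sum and product, and reassemble via Theorem~\ref{Thm:GcodeGlattice2}. So in structure your proof is correct and matches the (unwritten) paper proof.

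The one step you explicitly deferred --- identifying the reference vector $y$ and its normalisation --- does close, and since the theorem's conclusion ``is an $H$-Jacobi theta series'' requires $y$ to be a lattice vector, it is worth recording how: take $w \in \ZZ^{n}$ to have entry $|R_0|$ at every coordinate of every orbit in $E \setminus T$ and $0$ elsewhere; then $w$ is $\theta_{H}$-invariant, $\rho(w) = 0 \in C\theta_{H}$, and $y := \frac{1}{\sqrt{|R_0|}}\,w$ lies in $\Lambda_{0}(C)\theta_{H}$. With the inner-product normalisation actually used in the proof of Theorem~\ref{Thm:WeightEnumtoThetaSeries} (where $\la x, x'\ra_{H}$ is read off from the orbit coefficients of the underlying integer vectors, scaled by $1/|R_0|$), this choice gives $\la x, y\ra_{H} = \sum_{j \in E\setminus T} b_{j}$, exactly the linear exponents contributed by the $\phi$-factors, so the substituted polynomial is the $H$-Jacobi theta series $\vartheta^{H}_{\Lambda_{0}(C)\theta_{H},\,y}(\tau,z)$ with respect to a genuine lattice vector. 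One caveat, which you correctly sensed: if one instead uses the literal definition of $\la \cdot,\cdot\ra_{H}$ on $\Gamma\theta_{H}$ via the orthonormal vectors $\overline{H_{\Gamma}(\alpha_{j})}$ (entries $1/\sqrt{m_{j}}$), then the coefficient of $x$ over $b_{j}$ is $b_{j}\sqrt{m_{j}}/\sqrt{|R_0|}$, and both the quadratic and the linear exponents acquire factors $m_{j}$ that the theta functions $\psi_{a}$, $\phi_{a}$ do not contain; this discrepancy is already present in the paper's own proof of Theorem~\ref{Thm:WeightEnumtoThetaSeries} and is an inconsistency of the paper's conventions rather than a flaw in your argument, which is valid under the convention that proof in fact uses.
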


\section{Concluding remarks}

Cameron~\cite{Cameron} first showed a correspondence between the cycle indices and 
the weight enumerators. Subsequently, the correspondence was investigated for higher genus
cases in~\cite{CMO2024, MO2019}. So, it is natural to find an analogue of the
relation for $G$-codes with an action of Hayden's operator.
We shall investigate this analogous relation in some subsequent papers.


\section*{Acknowledgements}

The authors thank Manabu Oura for helpful discussions. 
The first named author is supported by SUST Research Centre under 
Research Grant PS/2023/1/22.
The second named author is supported by JSPS KAKENHI (22K03277). 


\end{document}